\newcommand{\cyc}{\mathrm{cyc}}
\newcommand{\red}{\mathrm{red}}
\newcommand{\cont}{\mathrm{cont}}
\newcommand{\sg}{\sigma}
\newcommand{\ind}{\mathrm{ind}}
\newcommand{\lan}{\langle}
\newcommand{\ran}{\rangle}
\definecolor{MyPurple}{rgb}{0.8,0,0.8}
\newtheorem{theorem}{Theorem}
\newtheorem{lemma}[theorem]{Lemma}
\newtheorem{corollary}[theorem]{Corollary}
\newcommand{\fig}[2]{\begin{figure}[ht]
\centerline{\scalebox{.66}{\epsfig{file=#1.eps}}}
\caption{#2}
\label{fig:#1}
\end{figure}}
\title{Frame patterns in $n$-cycles}
\author{Miles Jones}
\thanks{Instituto de Matem\'atica, Universidad de Talca, Camino Lircay S/N Talca, Chile \texttt{mjones@inst-mat.utalca.cl} \ Supported by FONDECYT (Fondo Nacional de Desarrollo Cient\'{\i}fico y
Tecnol\'ogico de Chile) postdoctoral grant \#3130631.}
\author{Sergey Kitaev}
\thanks{Department of Computer and Information Sciences, University of Strathclyde, Glasgow G1 1XH, UK. Email: \texttt{sergey.kitaev@cis.strath.ac.uk}}
\author{Jeffrey Remmel}
\thanks{Department of Mathematics, University of California, San Diego, La Jolla, CA 92093-0112. USA. Email: \texttt{jremmel@ucsd.edu}}
\begin{document}
\maketitle

\begin{abstract} In this paper, we study the distribution of the number 
of occurrences of the simplest frame pattern, called the $\mu$ pattern, in 
$n$-cycles. Given an $n$-cycle $C$, we say that a pair 
$\langle i,j \rangle$ matches the $\mu$ pattern if $i < j$ and as we traverse 
around $C$ in a clockwise 
direction starting at $i$ and ending at $j$, we never encounter 
a $k$ with $i < k < j$. We say that $ \lan i,j \ran$ is a nontrivial $\mu$-match 
if $i+1 < j$. Also, an $n$-cycle $C$ is incontractible if there is no 
$i$ such that $i+1$ immediately follows $i$ in $C$. 

We show that the number 
of incontractible $n$-cycles in the symmetric 
group $S_n$ is $D_{n-1}$, where $D_n$ is the 
number of derangements in $S_n$. Further, we prove that 
the number of $n$-cycles in $S_n$ with exactly $k$ $\mu$-matches can 
be expressed as a linear combination of binomial coefficients 
of the form $\binom{n-1}{i}$ where $i \leq 2k+1$. We also show that the 
generating function $NTI_{n,\mu}(q)$ 
of $q$ raised to the number of nontrivial $\mu$-matches 
in $C$ over all incontractible $n$-cycles in $S_n$ is a new $q$-analogue of 
$D_{n-1}$, which is different from the $q$-analogues of the derangement
numbers that have been studied by Garsia and Remmel and by Wachs. 
We show that there is a rather surprising connection 
between the charge statistic on permutations due to Lascoux and 
Sch\"uzenberger and our polynomials in that 
the coefficient of the smallest power of $q$ in $NTI_{2k+1,\mu}(q)$ is 
the number of permutations in $S_{2k+1}$ whose charge path is a Dyck path. Finally, we show 
that $NTI_{n,\mu}(q)|_{q^{\binom{n-1}{2} -k}}$
  and $NT_{n,\mu}(q)|_{q^{\binom{n-1}{2} -k}}$ are the number of 
partitions of $k$ for sufficiently large $n$.
\end{abstract}

\section{introduction}

Mesh patterns were introduced in \cite{BrCl} by Br\"and\'en and Claesson, and they were studied in a series of papers (e.g. see \cite{kitlie} by Kitaev and Liese, and references therein). A particular class of mesh patterns is {\em boxed patterns} introduced in \cite{AKV} by Avgustinovich et al., who later suggested to call this type of patterns {\em frame patterns}. The simplest 
frame pattern which is called the $\mu$ pattern is defined 
as follows. Let $S_n$ denote the set of all permutations of $\{1, \ldots, n\}$. Given 
$\sigma=\sigma_1\sigma_2\dots\sigma_n \in S_n$, we 
say that  a pair $\langle \sigma_i,\sigma_j \rangle$ is an 
occurrence of the $\mu$  pattern in $\sg$ if $i<j$,
$\sigma_i<\sigma_j$, and there is no $i < k < j$ 
such that $\sigma_i< \sg_k < \sigma_j$ (the last condition is indicated by the shaded area in Figure \ref{fig:48261357}). The $\mu$ pattern 
is shown in Figure~\ref{fig:meshpattern} using the notation in \cite{BrCl}. 

\begin{figure}[htbp]
  \begin{center}
   \includegraphics[width=0.15\textwidth]{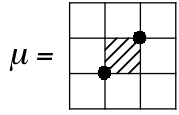}
    \caption{The mesh pattern $\mu$.}
    \label{fig:meshpattern}
  \end{center}
\end{figure}

Similarly, we say that the pair $\langle \sigma_i,\sigma_j \rangle$ is an 
occurrence of the $\mu'$ pattern   in $\sg$ if $i<j$, $\sigma_i>\sigma_j$, 
and there is no $i < k < j$ such that $\sigma_i> \sg_k > \sigma_j$. 
For example, if $\sigma=4~8~2~6~1~3~5~7$, then the occurrences of $\mu$ in 
$\sg$ are
 $$\lan 4,8\ran,\lan 4,6\ran,\lan 4,5\ran,\lan 2,6\ran,\lan 2,3\ran,\lan 6,7\ran,\lan 1,3\ran,\lan 3,5\ran,\lan 5,7\ran$$
and the occurrences of $\mu'$ in $\sg$ are 
$$\lan 4, 2 \ran, \lan 4,3 \ran, \lan 8, 2 \ran, \lan 8,6 \ran, \lan 8,7 \ran, 
\lan 2,1 \ran, \lan 6,1 \ran, \lan 6,3 \ran,  \lan 6, 5 \ran.$$
We let $N_\mu (\sg)$ (resp., $N_{\mu'} (\sg)$) denote the number of occurrences of 
the $\mu$ (resp., $\mu'$) in $\sg$. 
The {\em reverse} of  $\sg = \sg_1 \ldots \sg_n \in S_n$, $\sg^r$, is the permutation $\sg_n \sg_{n-1} \ldots \sg_1$, and the {\em complement} of $\sg$, $\sg^c$,  is the permutation 
$(n+1-\sg_1) (n+1 -\sg_2) \ldots (n+1 - \sg_n)$.  It is 
easy to see that $N_{\mu}(\sg) = N_{\mu'}(\sg^r) = N_{\mu'}(\sg^c)$ and thus, since the reverse and complement are trivial bijections from $S_n$ to itself, studying the distribution of $\mu$-matches in $S_n$  is equivalent to studying 
the distribution of $\mu'$=mathches in $S_n$.

 \begin{figure}[htbp]
  \begin{center}
   \includegraphics[width=0.2\textwidth]{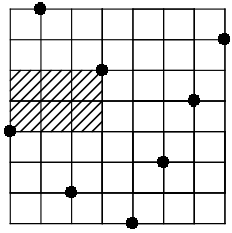}
    \caption{The graph of the permutation 48261357 with the occurrence 
$\lan 4,6 \ran$ highlighted.}
    \label{fig:48261357}
  \end{center}
\end{figure}

If we graph a given permutation $\sg$ as dots on a grid as in Figure \ref{fig:48261357}, then one can see
that each occurrence of $\mu$ is a pair of increasing dots such that there are no dots within the
rectangle created by the original dots. The occurrence $\lan 4,6 \ran$ is highlighted in Figure \ref{fig:48261357}; in particular, there are no dots within the shaded rectangle.

Jones and Remmel studied the distribution of 
cycle-occurrences of classical consecutive patterns in \cite{JR}. See \cite{kit} for a comprehensive introduction to the theory of permutation patterns. In this paper, we shall study the distribution 
of the {\em cycle-occurrences} of $\mu$ in the cycle structure 
of permutations in the symmetric group $S_n$.  That is, 
suppose that we are given 
a $k$-cycle $C$ in a permutation $\sg$ in the symmetric group $S_n$. Then  
we will always write $C =(c_0, \ldots, c_{k-1})$ where $c_0$ is the 
smallest element of the cycle. We will always draw such a 
cycle with $c_0$ at the top and assume that we traverse around the cycle 
in a clockwise direction. For example, if $C = (1,4,6,2,7,5,8,3)$, then we 
would picture $C$ as in Figure \ref{fig:cycle}.  We 
 say that $c_s$ is {\em cyclically between} 
$c_i$ and $c_j$ in $C$, if starting 
at $c_i$, we encounter $c_s$ before we encounter $c_j$ as we traverse 
around the 
cycle in a clockwise direction. Alternatively, we can identify 
$C$ with the permutation  $c_0 c_1 \ldots c_{k-1}$. In this notation,  
$c_s$ is cyclically between
 $c_i$ and $c_j$ if either $i<s<j$, or $j<i<s$, or $s<j<i$. For example, in the cycle 
$C = (1,4,6,2,7,5,8,3)$, 8 is cyclically between 2 and 3. 
Then we say that the pair $\langle c_i,c_j \rangle$ is 
a  {\em cycle-occurrence of $\mu$ in $C$}  if $c_i < c_j$ and there is 
no $c_s$ such that $c_i < c_s < c_j$ and $c_s$ is cyclically 
between $c_i$ and $c_j$.  
Similarly, 
we say that the pair $\langle c_i,c_j \rangle$ is 
a cycle-occurrence of $\mu'$ in $C$, if $c_i > c_j$ and there is 
no $c_s$ such that $c_i > c_s > c_j$ and $c_s$ is cyclically 
between $c_i$ and $c_j$. As is the case with permutations,  
the study of the number of cycle-occurrences of $\mu$ in 
the cycle structures of permutations is equivalent to 
the study of the number of cycle-occurrences of $\mu$ in 
the cycle structures of permutations. That is, given 
the cycle structure $C_1, \ldots, C_k$ of a permutation $\sg \in S_n$, 
the cycle complement of $\sg$, $\sg^{\mbox{cyc-c}}$, is the permutation 
whose cycle structure arises from the cycle structure of 
$\sg$ by replacing each number $i$ by $n+1-i$. For example, 
if $\sg$ consists of the cycle $(1,4,2,6), (3,8) (5,9,7)$, then 
$\sg^{\mbox{cyc-c}}$ consists of the cycles $(9,6,8,4),(7,2),(5,1,3)$. 
It is then easy to see that for all $\sg \in S_n$, $\sg$  has $k$ 
cycle-occurrences of $\mu$ if and only if  $\sg^{\mbox{cyc-c}}$ has $k$ cycle-occurrences of $\mu'$.

Let $\mathcal{C}_n$ be the set of $n$-cycles in $S_n$.  If 
$C=(1,c_1, \ldots,c_{n-1}) \in \mathcal{C}_n$, then  
it is clear that $\langle i,i+1\rangle$ is always a cycle-occurrence of 
$\mu$ in $C$. We shall 
call such cycle-occurrences {\em trivial occurrences of $\mu$} or 
 {\em trivial $\mu$-matches} in $C$ and all other cycle-occurrences of $\mu$ 
in $C$ will be called  {\em nontrivial occurrences of $\mu$} or 
{\em nontrivial $\mu$-matches}  in $C$. 
We let $N_{\mu}(C)$ denote the number of occurrences 
of $\mu$ in $C$ and $NT_{\mu}(C)$ denote the number of 
nontrivial occurrences of $\mu$ in $C$. For 
example, if $C = (1,4,6,2,7,5,8,3)$, the nontrivial occurrences 
of $\mu$  in $C$ are the pairs $\langle 1,4\rangle$, 
$\langle 2,7\rangle$, $\langle 2,5\rangle$, $\langle 4,6\rangle$, 
and $\langle 5,8\rangle$, so that $NT_{\mu}(C) =5$. 
Clearly, if $C = (1,c_1, \ldots, c_{n-1})$ is an $n$-cycle in $\mathcal{C}_n$, 
then $N_{\mu}(C) = (n-1) + NT_\mu(C)$, since for all 
$1 \leq i \leq n-1$, $\lan i, i+1 \ran$ will be a trivial occurrence 
of $\mu$ in $C$.   
If $C$ is a 1-cycle, then $NT_\mu(C) = N_{\mu}(C) =0$. 

 \begin{figure}[htbp]
  \begin{center}
   \includegraphics[width=0.2\textwidth]{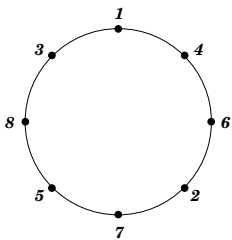}
   \vspace{-1cm}
    \caption{The cycle $C = (1,4,6,2,7,5,8,3)$.}
    \label{fig:cycle}
  \end{center}
\end{figure}

If $\sg_1 \ldots \sg_n$ is any sequence of distinct integers, 
we let $\red (\sg)$ denote the permutation of $S_n$ that 
is obtained by replacing the $i$th largest 
element of $\{\sg_1, \ldots, \sg_n\}$ 
by $i$ for $i =1, \ldots n$. For example, 
$\red (3~7~10~5~2) = 2~4~5~3~1$. Similarly, 
if  $C =(c_0, \ldots, c_{k-1})$ is a $k$-cycle in 
some permutation $\sg \in S_n$, we let 
$\red (C)$ be the $k$-cycle in $S_k$ which is obtained 
by replacing the $i$th largest element of $C$ by $i$. For 
example, if $C = (2, 6,7,3,9)$, then 
$\mbox{red}(C) = (1,3,4,2,5)$. In such a situation, we 
let $NT_\mu(C) = NT_\mu(\mbox{red}(C))$.  Finally, if 
$\sg$ consists of cycles $C^{(1)}, \ldots, C^{(\ell)}$, then we let 
$CNT_\mu(\sg) = \sum_{i =1}^\ell NT_\mu(C^{(i)})$.

We note that if one wishes to study the generating function 
\begin{equation}\label{eq:CNT}
CNT_{\mu}(q,x,t) = 1+ \sum_{n \geq 1} \frac{t^n}{n!} 
\sum_{\sg \in S_n} q^{CNT_{\mu}(\sg)} x^{\mbox{cyc}(\sg)}, 
\end{equation}
where $\mbox{cyc}(\sg)$ denotes the number of cycles in $\sg$, 
then it is enough 
to study the generating function 
\begin{equation}\label{eq:NT}
NT_\mu(q,t) = \sum_{n \geq 1} \frac{t^n}{n!} NT_{n,\mu}(q)
\end{equation}
where $NT_{n,\mu}(q)=\sum_{C \in \mathcal{C}_n}q^{NT_\mu(C)}$.
That is, it easily follows from the exponential formula 
that if $w$ is a weight function on $n$-cycles $C \in \mathcal{C}_n$ 
and for any permutation $\sg$ whose cycle structure is 
$C_1, \ldots, C_k$, we define $w(\sg) = \prod_{i=1}^k w(\red(C_i))$, 
then 
$$1+ \sum_{n \geq 1} \frac{t^n}{n!} \sum_{\sg \in S_n} w(\sg) x^{\cyc(\sg)}= 
e^{x\sum_{n \geq 1} \frac{t^n}{n!} \sum_{C \in \mathcal{C}_n} w(C)}.$$  
Hence 
\begin{equation}
CNT_\mu(q,x,t) = e^{xNT_\mu(q,t)}.
\end{equation}
The main focus of this paper is to study the polynomials 
$NT_{n,\mu}(q)$. 
In Table \ref{tab:incont0} we provide polynomials $NT_{n,\mu}(q)$ for $n\leq 10$ that were calculated using Mathematica.

 \begin{table}[h]
\caption{Polynomials $NT_{n,\mu}(q)$.}\label{tab:incont0}    
    \begin{tabular}{c||*{2}{l}}
$NT_{1,\mu}(q)$ & $1$ \\
 \hline
$NT_{2,\mu}(q)$ & $1$ \\
 \hline
$NT_{3,\mu}(q)$ & $1 + q$ \\
 \hline
$NT_{4,\mu}(q)$ & $1 + 3\,q + q^2 + q^3$ \\
 \hline
$NT_{5,\mu}(q)$ & $1 + 6\,q + 6\,q^2 + 7\,q^3 + 2\,q^4 + q^5 + q^6$ \\
 \hline
$NT_{6,\mu}(q)$ & $1 + 10\,q + 20\,q^2 + 31\,q^3 + 23\,q^4 + 15\,q^5 + 13\,q^6 + 3\,q^7 + 2\,q^8 + q^9 + q^{10}$ \\
 \hline
$NT_{7,\mu}(q)$ & $1 + 15\,q + 50\,q^2 + 106\,q^3 + 135\,q^4 + 126\,q^5 + 119\,q^6 + 66\,q^7 + 46\,q^8 +$\\
&$ 25\,q^9 + 19\,q^{10} + 5\,q^{11} + 3\,q^{12} + 2\,q^{13} + q^{14} + q^{15}$ \\
 \hline
$NT_{8,\mu}(q)$ & $1 + 21\,q + 105\,q^2 + 301\,q^3 + 561\,q^4 + 736\,q^5 + 850\,q^6 + 726\,q^7 + 603\,q^8 +$\\
&$ 418\,q^9 + 299\,q^{10} + 174\,q^{11} + 101\,q^{12} + 65\,q^{13} + 33\,q^{14} + 27\,q^{15} + 7\,q^{16} +$\\
&$ 5\,q^{17} + 3\,q^{18} + 2\,q^{19} + q^{20} + q^{21}$ \\
 \hline
$NT_{9,\mu}(q)$ & $1 + 28\,q + 196\,q^2 + 742\,q^3 + 1870\,q^4 + 3311\,q^5 + 4820\,q^6 + 5541\,q^7 +$\\
&$ 5675\,q^8 + 5007\,q^9 + 4055\,q^{10} + 3093\,q^{11} + 2116\,q^{12} + 1461\,q^{13} + 888\,q^{14} +$\\
&$ 646\,q^{15} + 338\,q^{16} + 217\,q^{17} + 126\,q^{18} + 80\,q^{19} + 44\,q^{20} + 35\,q^{21} +$\\
&$ 11\,q^{22} + 7\,q^{23} + 5\,q^{24} + 3\,q^{25} + 2\,q^{26} + q^{27} + q^{28}$ \\
 \hline
$NT_{10,\mu}(q)$ & $1 + 36\,q + 336\,q^2 + 1638\,q^3 + 5328\,q^4 + 12253\,q^5 + 22392\,q^6 +$\\
&$ 32864\,q^7 + 41488\,q^8 + 45433\,q^9 + 44119\,q^{10} + 40008\,q^{11} + 32781\,q^{12} +$\\
&$ 25689\,q^{13} + 18551\,q^{14} + 13710\,q^{15} + 9137\,q^{16} + 6179\,q^{17} + 3971\,q^{18} + $\\
&$2568\,q^{19} + 1640\,q^{20} + 1098\,q^{21} + 640\,q^{22} + 374\,q^{23} + 251\,q^{24} +$\\
&$ 148\,q^{25} + 100\,q^{26} + 56\,q^{27} + 46\,q^{28} + 15\,q^{29} + 11\,q^{30} + 7\,q^{31} +$\\
&$ 5\,q^{32} + 3\,q^{33} + 2\,q^{34} + q^{35} + q^{36}$ \\
 \hline
\end{tabular}
\end{table}

Given a polynomial $f(x) = \sum_{i=0}^n c_i x_i$, we let $f(x)|_{x^i} = c_i $ 
denote the coefficient of $x^i$ in $f(x)$. It is easy to explain several of 
the coefficients that appear in $NT_{n,\mu}(q)$. For example, for $n\ge2$,
$NT_{n,\mu}(q)|_{q^0}=1$ because $(1,2,\dots,n)$ is 
the only $n$-cycle that has no 
nontrivial occurrences of $\mu$. Similarly, 
$NT_{n,\mu}(q)|_{q^{{n-1\choose2}}}=1$ since  
the only $n$-cycle with $\binom{n-1}{2}$ nontrivial occurrences of $\mu$ 
is $(1,n,n-1,n-2,\dots,2)$.\label{maxcycle} We computed that the 
sequence $(NT_{n,\mu}(q)|_{q^1})_{n \geq 3}$ starts out  
$1,3,6,10,15,21, \ldots $ which leads to a 
conjecture that $NT_{n,\mu}(q)|_{q^1} = \binom{n-1}{2}$ for 
$n \geq 3$.  Similarly, we computed that 
the sequence $(NT_{n,\mu}(q)|_{q^2})_{n \geq 4}$ 
starts out 
$1,6,20,50,105,196, \ldots $ which  is the initial terms of sequence 
A002415 in the {\em On-Line Encyclopedia of Integer Sequences} ({\em OEIS}) 
whose $n$th term is given by the formula $\frac{n^2(n^2-1)}{12}$.  
We will verify 
this by proving that for $n \geq 5$, $NT_{n,\mu}(q)|_{q^2} = 
\binom{n-1}{3} + 2\binom{n-1}{4}$. 

In fact, 
we shall show that for any fixed $k \geq 0$, there are constants $c_0, c_1, 
\ldots, c_{2k+1}$ such that 
$NT_{n,\mu}(q)|_{q^k} = \sum_{i=1}^{2k+1} c_i \binom{n-1}{i}$ 
for all $n \geq 2k+2$. To prove this result, 
we will need to study what we call incontractible $n$-cycles which 
are $n$-cycles $C$ for which there are no integers $i$ such that $i+1$ immediately 
follows $i$ in $C$.  We let $\mathcal{IC}_n$ denote 
the set of incontractible $n$-cycles in $\mathcal{C}_n$. We 
will show that $|\mathcal{IC}_n|$ is $D_{n-1}$ where 
$D_n$ is the number of derangements of $S_n$, i.e. the 
number of $\sg \in S_n$ such that $\sg$ has no fixed points. 
We will  also study the  polynomials 
$$NTI_{n,\mu}(q)=\sum_{C\in \mathcal{IC}_n}q^{NT_\mu(C)}.$$
We have computed Table \ref{tab:poly} for the 
polynomials $NTI_{n,\mu}(q)$ using Mathematica.

    \begin{table}[h]
\caption{Polynomials $NTI_{n,\mu}(q)$.}\label{tab:poly}
\begin{tabular}{c||*{2}{l}}
$NTI_{1,\mu}(q)$ & $1$ \\
 \hline
$NTI_{2,\mu}(q)$ & $0$ \\
 \hline
$NTI_{3,\mu}(q)$ & $q$ \\
 \hline
$NTI_{4,\mu}(q)$ & $q^2 + q^3$ \\
 \hline
$NTI_{5,\mu}(q)$ & $2\,q^2 + 3\,q^3 + 2\,q^4 + q^5 + q^6$ \\
 \hline
$NTI_{6,\mu}(q)$ & $6\,q^3 + 13\,q^4 + 10\,q^5 + 8\,q^6 + 3\,q^7 + 2\,q^8 + q^9 + q^{10}$ \\
 \hline
$NTI_{7,\mu}(q)$ & $5\,q^3 + 27\,q^4 + 51\,q^5 + 56\,q^6 + 48\,q^7 + 34\,q^8 + 19\,q^9 + 13\,q^{10}$\\
&$ + 5\,q^{11} + 3\,q^{12} + 2\,q^{13} + q^{14} + q^{15}$ \\
 \hline
$NTI_{8,\mu}(q)$ & $29\,q^4 + 134\,q^5 + 255\,q^6 + 327\,q^7 + 323\,q^8 + 264\,q^9 + 187\,q^{10} + 139\,q^{11} +$\\
&$ 80\,q^{12} + 51\,q^{13} + 26\,q^{14} + 20\,q^{15} + 7\,q^{16} + 5\,q^{17} + 3\,q^{18} + 2\,q^{19} + q^{20} + q^{21}$ \\
 \hline
$NTI_{9,\mu}(q)$ & $14\,q^4 + 181\,q^5 + 694\,q^6 + 1413\,q^7 + 2027\,q^8 + 2307\,q^9 + 2139\,q^{10} + 1841\,q^{11} +$\\
&$ 1392\,q^{12} + 997\,q^{13} + 652\,q^{14} + 458\,q^{15} + 282\,q^{16} + 177\,q^{17} + 102\,q^{18} + 64\,q^{19} + $\\
&$36\,q^{20} + 27\,q^{21} + 11\,q^{22} + 7\,q^{23} + 5\,q^{24} + 3\,q^{25} + 2\,q^{26} + q^{27} + q^{28}$ \\
 \hline
 $NTI_{10,\mu}(q)$ &$130\,q^5 + 1128\,q^6 + 3965\,q^7 + 8509\,q^8 + 13444\,q^9 + 16918\,q^{10} +$\\
 &$ 18015\,q^{11} + 17121\,q^{12} + 14712\,q^{13} + 11663\,q^{14} + 8784\,q^{15} + 6347\,q^{16} +$\\
 &$ 4406\,q^{17} + 2945\,q^{18} + 1920\,q^{19} + 1280\,q^{20} + 819\,q^{21} + 541\,q^{22} +$\\
 &$ 311\,q^{23} + 206\,q^{24} + 121\,q^{25} + 82\,q^{26} + 47\,q^{27} + 37\,q^{28} + 15\,q^{29} + $\\
 &$11\,q^{30} + 7\,q^{31} + 5\,q^{32} + 3\,q^{33} + 2\,q^{34} + q^{35} + q^{36}$ \\
 \hline
\end{tabular}
\end{table}

We  shall show 
that $NTI_{n,\mu}(q)|_{q^{\binom{n-1}{2} -k}}$
  and $NT_{n,\mu}(q)|_{q^{\binom{n-1}{2} -k}}$ are the number of 
partitions of $k$ for sufficiently large $n$.
We show this by plotting the \emph{non-$\mu$-matches} of  
an $n$-cycle $C \in \mathcal{C}_n$, i.e. the pairs of integers $\langle i,j \rangle$ 
with $i < j$ which are not occurrences of the $\mu$ pattern in $C$, on an 
$n \times n$  grid and shading a cell in the $j$th row and 
$i$th column if and only if 
$\langle i, j \rangle$ is a non-$\mu$-match of $C$. For example, the 
non-$\mu$-matches
of the cycle $C= (1, 8, 7, 6, 5, 11, 4, 3, 10, 2, 9)$ are $\langle1,9\rangle,\langle1,10\rangle,\langle1,11\rangle,\langle2,10\rangle,\langle2,11\rangle,\langle3,11\rangle$
 and $\langle4,11\rangle$. These pairs can be plotted on 
the $11 \times 11$ grid as shown in Figure \ref{fig:fdex1}.
  \begin{figure}[htbp]
  \begin{center}
   \includegraphics[width=0.30\textwidth]{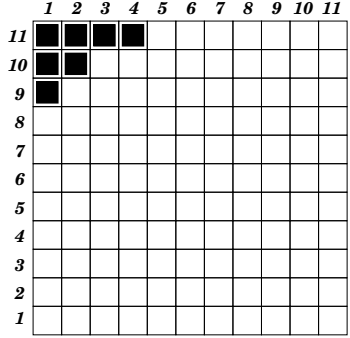}
    \caption{The non-matches of $(1, 8, 7, 6, 5, 11, 4, 3, 10, 2, 9)$ plotted on an $11\times 11$ grid}
    \label{fig:fdex1}
  \end{center}
\end{figure}
The shaded plot in Figure \ref{fig:fdex1} is of the form of the {\em Ferrers diagram} of the integer partition $\lambda=(4,2,1)$. We will show that if $C \in \mathcal{C}_n$ and there are 
fewer than $n-2$ non-matches in $C$, then shaded squares 
in the plot of the non-$\mu$-matches in  $C$ 
will be of the form of a Ferrers diagram of an integer partition. Moreover, 
we shall show that if $C \in \mathcal{C}_n$ 
has fewer than $n-2$ non-matches, then $C$ must be incontractible. 
Hence, it follows that $NTI_{n,\mu}(q)|_{q^{\binom{n-1}{2} -k}}$ and $NT_{n,\mu}(q)|_{q^{\binom{n-1}{2} -k}}$ are the number of 
partitions of $k$ for sufficiently large $n$. 

We will show that $NTI_{n,\mu}(q)|_{q^{k}} = 0$ for 
$n \geq 2k+2$ and that the lowest power of $q$ which  appears in either 
$NTI_{2k,\mu}(q)$ or 
$NTI_{2k+1,\mu}(q)$ is $q^k$. 
We computed 
that the sequence  $(NTI_{2k+1,\mu}(q)|_{q^{k}})_{k \geq 3}$ starts 
out $1,2,5,14, \ldots $ which led us to conjecture that 
$NTI_{2k+1,\mu}(q)|_{q^{k}} =C_k = \frac{1}{n+1} \binom{2n}{n}$ where $C_k$
is the $k$th {\em Catalan number}. We shall prove this conjecture 
and our proof of this conjecture led to a surprising connection  
between the {\em charge statistic on permutations} as defined 
by Lascoux and Sch\"utzenberger \cite{LS} and our problem. 
That is, given a permutation $\sg = \sg_1 \ldots \sg_n$, one 
defines the index, $\mbox{ind}_\sg(\sg_i)$, of $\sg_i$ in $\sg$ as follows. 
First, 
$\mbox{ind}_{\sg}(1) =0$. Then, inductively, 
$\mbox{ind}_{\sg}(i+1) = \mbox{ind}_{\sg}(i)$ 
if $i+1$ appears to the right of $i$ in $\sg$ 
and  $\mbox{ind}_{\sg}(i+1) = 1+\mbox{ind}_{\sg}(i)$
if $i+1$ appears to the left $i$ in $\sg$. 
For example, if $\sg = 1~4~8~5~9~6~2~7~3$ and we use 
subscripts to indicate the index of $i$ in $\sg$, then 
we see that the indices associated with $\sg$ are 
$$\sg =  1_0~4_1~8_2~5_1~9_2~6_1~2_0~7_1~3_0 .$$ 
The charge of $\sg$, $\mbox{ch}(\sg)$, is equal to 
$\sum_{i=1}^n \mbox{ind}_\sg(i)$. Thus, 
for example, if $\sg = 1~4~8~5~9~6~2~7~3$, then $\mbox{ch}(\sg) =8$. 
We will associate a path  which we call the {\em charge path} of 
$\sg$ whose vertices are elements $(i,\mbox{ind}_\sg(\sg_i))$ and 
whose edges are $\{ 
(i,\mbox{ind}_\sg(\sg_i)), (i+1,\mbox{ind}_\sg(\sg_{i+1}))\}$ for $i = 1, 
\ldots, n$. 
For example, if $\sg = 1~4~8~5~9~6~2~7~3$ as above, then 
charge path of $\sg$, which we denote by $\mbox{chpath}(\sg)$,
 is pictured in Figure \ref{fig:chgr}. 
In our particular example, the charge graph of $\sg$ is a {\em Dyck 
path} (a lattice path with steps (1,1) and (1,-1) from the origin (0,0) to $(2n,0)$ that never goes below the $x$-axis) and if $C =(1,4,8,5,9,6,2,7,3)$ is the $n$-cycle induced 
by $\sg$, then $NT_\mu(C) = 4$. This is no accident. That is, 
we shall show that if $C =(1,\sg_2, \ldots, \sg_{2n+1})$ is 
$2n+1$-cycle in $\mathcal{IC}_{2n+1}$, then 
$NT_\mu(C) = n$ if and only if the charge path of 
$\sg = 1 \sg_2 \ldots \sg_{2n+1}$  
is a Dyck path of length $2n$.

\fig{chgr}{The charge path of $\sg = 1~4~8~5~9~6~2~7~3$.}

The outline of this paper is as follows. In Section \ref{cont}, we shall 
introduce the notion of the contraction of an $n$-cycle 
and use it to show that for all $n \geq 1$, there 
are positive integers  $c_1, \ldots, c_{2k+1}$ such that 
$NT_{n,\mu}(q)|_{q^k} = \sum_{s= \left\lfloor\frac{3+\sqrt{1+8k}}{2}\right\rfloor}^{2k+1} c_s \binom{n-1}{s-1}$. In Section \ref{incont}, 
we shall study incontractible cycles and the polynomials 
$NTI_{n,\mu}(q)$. We shall show that 
for all $n \geq 1$, $NTI_{2n+1,\mu}(q)|_{q^n} =C_n$ where 
$C_n$ is $n$th Catalan number.  In Section \ref{intpart}, we study 
the non-$\mu$-matches in $n$-cycles of $\mathcal{C}_n$ and use
them to show that $NTI_{n,\mu}(q)|_{q^{\binom{n-1}{2} -k}}$ and $NT_{n,\mu}(q)|_{q^{\binom{n-1}{2} -k}}$ are the number of 
partitions of $k$ for sufficiently large $n$. In Section \ref{concl}, we will state our conclusions and some open problems.

\section{The contraction of a cycle.}\label{cont}

Define a \emph{bond} of an $n$-cycle $C \in \mathcal{C}_n$ 
to be a pair of consecutive integers $(a,a+1)$ such that
$a$ is immediately followed by $a+1$ in the cycle $C$.
For example, the bonds of the cycle $(1,4,5,6,2,7,8,3)$ are the pairs $(4,5)$, $(5,6)$, and $(7,8)$. If $C=(c_1,c_2,\dots,c_n)$ is an $n$-cylce 
where $c_1 =1$, define the sets
 $R_1,R_2,\dots,R_k$ recursively as follows. First, let $1=c_1 \in R_1$. 
Then inductively, if $c_i\in R_j$, then $c_{i+1}\in R_j$ if $c_{i+1}=c_i+1$,
and $c_{i+1}\in R_{j+1}$ otherwise.
For example, for
 the cycle $C=(1,2,4,6,7,8,3,5)$ these sets are $R_1=\{1,2\},R_2=\{4\},R_3=\{6,7,8\},R_4=\{3\},R_5=\{5\}$.
 We will call these sets the \emph{consecutive runs of $C$}.
 Define the \emph{contraction} of a cycle $C$
 with consecutive runs 
  $R_1,R_2,\dots,R_k$ to be 
  $$\cont(C)=\red(\max(R_1),\max(R_2),\dots,\max(R_k)),$$
where $\max(R_i)$ denotes the maximum element in a run $R_i$.  For example, the contraction of $C=(1,2,4,6,7,8,3,5)$ is
  $$\cont(1,2,4,6,7,8,3,5)=\red(2,4,8,3,5)=(1,3,5,2,4).$$
  We say that $C$ \emph{contracts} to $A$ if $\cont(C)=A$.
  Clearly, a cycle $C$ is \emph{incontractible} if $\cont(C)=C$.

We claim that the number of non-trivial $\mu$ matches 
does not change as we pass from  $C$ to $\cont(C)$. That is, 
we have the following theorem. 
\begin{theorem}\label{ntmu}
For any $n$-cycle $C \in \mathcal{C}_n$, 
\begin{equation*}
NT_\mu(C)=NT_\mu(\cont(C)).
\end{equation*}
\end{theorem}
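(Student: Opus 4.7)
The plan is to construct an explicit bijection $\phi$ between the nontrivial $\mu$-matches of $C$ and those of $\cont(C)$. The starting point is a structural lemma: if $\langle i,j\rangle$ is a nontrivial $\mu$-match of $C$, then $i$ is the maximum of its consecutive run and $j$ is the minimum of its consecutive run. Indeed, if $i$ were not the max of its run $R_s$, then by the definition of runs, $i+1\in R_s$ and $i+1$ immediately follows $i$ in $C$; since $i+1<j$ (the match is nontrivial), the integer $i+1$ is cyclically between $i$ and $j$ and satisfies $i<i+1<j$, contradicting that $\langle i,j\rangle$ is a $\mu$-match. The argument that $j$ must be the minimum of its run is symmetric, using that $j-1$ would immediately precede $j$ in the cycle.

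Writing the runs of $C$ as $R_1,\ldots,R_k$ with maxima $M_1,\ldots,M_k$ and minima $m_1,\ldots,m_k$, the lemma says every nontrivial $\mu$-match of $C$ has the form $\langle M_s,m_t\rangle$ for some $s\neq t$. I define
\[
\phi(\langle M_s,m_t\rangle)=\langle\red(M_s),\red(M_t)\rangle
\]
in $\cont(C)$. The key structural observation is that contraction only deletes the non-maximum entries of each run, so the cyclic order of $M_1,\ldots,M_k$ in $C$ coincides (after reduction) with the cyclic order of their images in $\cont(C)$. Also, since runs are disjoint intervals of integers and $M_s<M_t$, one checks that $M_s<m_t$, so $\langle M_s,m_t\rangle$ is a well-ordered pair.

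The bulk of the proof is to check that $\phi$ and its candidate inverse $\langle\red(M_s),\red(M_t)\rangle\mapsto\langle M_s,m_t\rangle$ both land in the sets of nontrivial $\mu$-matches. For the forward direction, any obstruction in $\cont(C)$ would come from some max $M_u$ with $M_s<M_u<M_t$ cyclically between $M_s$ and $M_t$; using that $R_t$ appears in the cycle as the consecutive block $m_t,m_t+1,\ldots,M_t$ and that $M_u\notin R_t$, one argues that $M_u<m_t$ and that $M_u$ sits on the clockwise arc from $M_s$ to $m_t$, contradicting the $\mu$-match $\langle M_s,m_t\rangle$ in $C$. Nontriviality $\red(M_s)+1<\red(M_t)$ follows because the integer $M_s+1$ (which lies in some run by the assumption $M_s+1<m_t$) contributes a max strictly between $M_s$ and $M_t$ in $C$.

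The inverse direction is the step I expect to be the main obstacle, since an obstruction $k$ with $M_s<k<m_t$ cyclically between $M_s$ and $m_t$ in $C$ need not itself be the maximum of a run. The fix is to transfer $k$ to the max $M_u$ of its run: if $k$ is not the max, then $M_u>k$ and $M_u$ appears just after $k$ inside the consecutive block $R_u$. Because runs are disjoint intervals of integers and $k<m_t$, the whole block $R_u$ lies below $m_t$, so $M_u<m_t$, and $M_u$ still sits on the arc from $M_s$ to $m_t$ (hence on the arc from $M_s$ to $M_t$). This produces a forbidden max strictly between $\red(M_s)$ and $\red(M_t)$ in $\cont(C)$, contradicting the assumed match. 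With both directions established, $\phi$ is a bijection and the theorem follows immediately.
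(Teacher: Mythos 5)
Your proof is correct, but it takes a genuinely different route from the paper's. The paper argues by induction on $n$: it locates the first consecutive run of size at least two, deletes all of its elements except the minimum to obtain a shorter cycle $C'$ with $\cont(C')=\cont(C)$, checks case by case that nontrivial $\mu$-matches correspond under this single deletion, and concludes $NT_\mu(C)=NT_\mu(C')=NT_\mu(\cont(C'))=NT_\mu(\cont(C))$. You instead prove a structural lemma --- every nontrivial $\mu$-match of $C$ runs from the maximum of one consecutive run to the minimum of another --- and use it to build one explicit bijection with the nontrivial matches of $\cont(C)$, handling potential obstructions by transferring an arbitrary witness $k$ to the maximum of its run. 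Your route is more global and arguably more illuminating: it explains \emph{why} contraction preserves the statistic (the matches live entirely on the boundaries of the runs), whereas the paper's induction only ever compares a cycle to one with a single run collapsed; the price is that you must be careful with the correspondence between arcs of $C$ and arcs of $\cont(C)$, which you handle correctly since the open arc from $\max(R_s)$ to $\min(R_t)$ is exactly the union of the runs cyclically strictly between $R_s$ and $R_t$. One small point to tidy: in the inverse direction you should also verify nontriviality of $\langle M_s,m_t\rangle$, i.e., that $m_t\neq M_s+1$. This is the easy converse of your forward nontriviality argument: if $m_t=M_s+1$, then since runs are disjoint intervals no run maximum can lie strictly between $M_s$ and $M_t$, so $\red(M_t)=\red(M_s)+1$, contradicting nontriviality of the match in $\cont(C)$. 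With that observation added, the bijection is complete and the theorem follows.
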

\begin{proof}
We proceed by induction on $n$. The theorem is clearly true for 
$n =1$. Now, suppose that we are given an $n$-cycle $C \in \mathcal{C}_n$ 
and $R_1, \ldots, R_k$ are the consecutive runs.  
If $k =n$, then, clearly, $C$ is incontractible. Otherwise, 
suppose that $s$ is the least $i$ such that 
$|R_i| \geq 2$. Let $R_s = \{i,i+1, \ldots, j\}$. 
Then it is easy to see that in $C$, there are no 
pairs $\lan a, t \ran$ which are nontrivial occurrences of 
$\mu$ in $C$ for $i \leq a \leq j-1$ since $a+1$ immediately 
follows $a$ in $C$.  Similarly, there are no 
pairs $\lan s, b \ran$ which are nontrivial occurrences of 
$\mu$ in $C$ for $i+1 \leq b \leq j$ since $b-1$ immediately 
precedes $b$ in $C$. Now, suppose that $C'$ arises  
from $C$ by removing $i+1, \ldots, j$ and replacing 
each $s > j$ by $s -(j-i)$.  Then it is easy to see 
that 
\begin{enumerate} 
\item if $s < t \leq i$, $\lan s,t \ran$ is a nontrivial occurrence of 
$\mu $ in $C$ if and only if  $\lan s,t \ran$ is a nontrivial occurrence of 
$\mu $ in $C'$,

\item if $s \leq i < j < t$, then $\lan s,t \ran$ is a nontrivial occurrence of 
$\mu $ in $C$ if and only if  $\lan s,t-(j-i) \ran$ is a nontrivial occurrence of 
$\mu $ in $C'$, and 

\item if $j \leq s < t$, then $\lan s,t \ran$ is a nontrivial occurrence of 
$\mu $ in $C$ if and only if \\
 $\lan s-(j-i),t-(j-i) \ran$ is a nontrivial occurrence of 
$\mu $ in $C'$.
\end{enumerate}
It follows that $NT_{\mu}(C) = NT_{\mu}(C')$. Moreover,  
it is easy to see that 
$\cont(C) = \cont(C')$.  By induction $NT_{\mu}(C') = NT_{\mu}(\cont(C'))$. 
Hence, $NT_{ \mu}(C) = NT_{\mu}(C') = NT_{\mu}(\cont(C')) =  NT_{\mu}(\cont(C))$. 
\end{proof}

It is easy to count the number of $n$-cycles $C \in \mathcal{C}_n$ 
for which $\cont(C) =A$. That is, we have the following theorem. 

\begin{theorem}\label{th:incont}
Let $A \in \mathcal{IC}_\ell$ be an incontractible cycle of length $\ell$. The number of $n$-cycles
$C \in \mathcal{C}_n$ such that $\cont(C)=A$ is $n-1 \choose \ell-1$.
\end{theorem}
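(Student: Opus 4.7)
The plan is to set up a bijection between $\{C \in \mathcal{C}_n : \cont(C) = A\}$ and the set of compositions of $n$ into $\ell$ positive parts, and then invoke the standard count $\binom{n-1}{\ell-1}$ for the latter. The forward map is natural: given such a $C$ with consecutive runs $R_1, R_2, \ldots, R_\ell$ listed in the order in which they appear along $C$ (starting with the run containing $1$), record the composition $(|R_1|, |R_2|, \ldots, |R_\ell|)$.

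For the inverse map, fix a composition $(r_1, r_2, \ldots, r_\ell)$ of $n$ and write $A = (a_1, \ldots, a_\ell)$ with $a_1 = 1$. Since the consecutive runs of any candidate $C$ must partition $\{1, 2, \ldots, n\}$ into intervals of consecutive integers, and since the reduction of the sequence of run-maxes is required to equal $A$, each $a_j$ prescribes the rank of the maximum of $R_j$. Letting $\sigma$ be the permutation defined by $a_{\sigma(i)} = i$, the $i$-th smallest max is forced to be $m_i = r_{\sigma(1)} + \cdots + r_{\sigma(i)}$, and one must set $R_j = [m_{a_j-1}+1, m_{a_j}]$ (with $m_0 = 0$). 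Listing each $R_j$ in increasing order and concatenating $R_1, R_2, \ldots, R_\ell$ produces the sequence $c_1 c_2 \cdots c_n$ defining $C$; the convention $a_1 = 1$ places $1$ at the start of $R_1$, ensuring $c_1 = 1$, and $C$ is automatically a single $n$-cycle since it is a cyclic arrangement of $n$ distinct elements.

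The one step requiring genuine attention is verifying that $R_1, \ldots, R_\ell$ are actually the consecutive runs of the constructed $C$ — i.e., that no two adjacent blocks $R_j$ and $R_{j+1}$ merge into a single longer run in the sense of the definition of consecutive runs. Two intervals of $\{1, \ldots, n\}$ glue into a larger interval exactly when their ranks among the $\ell$ intervals differ by $1$, so $\max(R_j)+1 = \min(R_{j+1})$ holds precisely when $a_{j+1} = a_j + 1$; the cyclic edge from $R_\ell$ back to $R_1$ cannot merge since that would force $a_\ell = 0$. Thus the only obstruction to the construction is the presence of a bond in $A$, and this is exactly the place where the hypothesis $A \in \mathcal{IC}_\ell$ is invoked. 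Once this is in hand, $\cont(C) = A$ is immediate from the rank-preserving construction, the two maps are manifestly mutual inverses, and the theorem follows from the standard fact that compositions of $n$ into $\ell$ positive parts are counted by $\binom{n-1}{\ell-1}$.
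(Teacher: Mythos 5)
Your proof is correct and follows essentially the same idea as the paper's: the $n$-cycles contracting to $A$ are parametrized by the run data (your composition of $n$ into $\ell$ parts is equivalent to the paper's choice of the $\ell-1$ run-maxima other than $n$), with the order of the runs then forced by $A$. The one point where you go beyond the paper is in explicitly checking that adjacent blocks of the reconstructed cycle do not merge into a longer run — which is exactly where the incontractibility of $A$ is needed and which the paper's proof leaves implicit — so this is a welcome tightening rather than a different route.
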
 
\begin{proof}
If $\cont(C)=A$, then there are $\ell$ consecutive runs of $C$, namely 
$R_1,\dots,R_\ell$ such that $\red(\max(R_1),\dots,\max(R_\ell))=A$.
Since the maximum of one of the consecutive runs must be $n$, there are
$n-1\choose\ell-1$ ways to choose the rest of the maxima. This will determine
the consecutive runs and then we order them so that $\red(\max(R_1),\dots,
\max(R_\ell))=A$.

For example, suppose $A=(1,3,5,2,4)$ and  $n=8$. The $8$-cycle that contracts to $A$
corresponding to the choice $\{1,3,4,7\}$ out of the $7\choose4$ choices is obtained by
 letting the maxima of the
consecutive runs be $\{1,3,4,7,8\}$. Therefore, the 
consecutive runs are $\{1\},\{2,3\},\{4\},\{5,6,7\},\{8\}$. Then 
we arrange them so that
$\red(\max(R_1),\dots,\max(R_\ell))=A$ and you get $\{1\},\{4\},\{8\},\{2,3\},\{5,6,7\}$, and so the cycle
is $(1,4,8,2,3,5,6,7)$.
\end{proof}
 
Theorems \ref {ntmu} and \ref{th:incont} imply the following 
theorem. 
\begin{theorem}\label{thm:computeC} 
For any $k \geq 0$ and $n \geq 1$, 
\begin{equation}\label{eq:computeC} 
NT_{n,\mu}(q)|_{q^k}=\sum_{s=1}^n \sum_{\substack{A \in \mathcal{IC}_s,\\NT_\mu(A)=k}}
{n-1\choose s-1}.  
\end{equation} 
\end{theorem}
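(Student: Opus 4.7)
The plan is to derive the identity as a direct corollary of Theorems \ref{ntmu} and \ref{th:incont} by partitioning $\mathcal{C}_n$ according to the contraction map. Specifically, I would define, for each $s$ with $1 \leq s \leq n$ and each $A \in \mathcal{IC}_s$, the fiber
\[
F(A) = \{C \in \mathcal{C}_n : \cont(C) = A\}.
\]
Since every $n$-cycle has a unique contraction, which is by definition an incontractible cycle, the family $\{F(A) : s \leq n,\ A \in \mathcal{IC}_s\}$ is a partition of $\mathcal{C}_n$.

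Next I would rewrite the polynomial $NT_{n,\mu}(q) = \sum_{C \in \mathcal{C}_n} q^{NT_\mu(C)}$ by summing over this partition and applying Theorem \ref{ntmu}, which gives $NT_\mu(C) = NT_\mu(\cont(C)) = NT_\mu(A)$ for every $C \in F(A)$. This yields
\[
NT_{n,\mu}(q) = \sum_{s=1}^n \sum_{A \in \mathcal{IC}_s} |F(A)|\, q^{NT_\mu(A)}.
\]
Then I would substitute $|F(A)| = \binom{n-1}{s-1}$ from Theorem \ref{th:incont}, since that count depends only on the length $s$ of $A$.

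Finally, extracting the coefficient of $q^k$ on both sides leaves exactly the double sum in the statement, restricted to those $A$ with $NT_\mu(A) = k$. I do not expect any real obstacle: the entire argument is a routine rearrangement once the two preceding theorems are in hand, and the only thing to verify carefully is that the partition into fibers is well-defined (which is immediate from the definition of $\cont$, since $\cont(C)$ is always an incontractible cycle, and $\cont$ is a function).
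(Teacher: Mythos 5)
Your proposal is correct and follows exactly the paper's argument: the paper's proof is the one-line observation that the identity follows by partitioning the $n$-cycles with $NT_\mu(C)=k$ according to their contractions, and then invoking Theorems \ref{ntmu} and \ref{th:incont}. You simply spell out the same fiber decomposition and coefficient extraction in more detail, including the (true, and worth noting) fact that $\cont(C)$ is always incontractible so the fibers are indexed by $\bigcup_s \mathcal{IC}_s$.
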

\begin{proof}
It is easy to see that (\ref{eq:computeC}) follows by 
partitioning the $n$-cycles $C \in \mathcal{C}_n$ such that 
$NT_{\mu}(C) =k$ by their contractions.
\end{proof}

We shall prove in the next section that 
the smallest power of $q$ that occurs in 
either $NTI_{2n,\mu}(q)$ or $NTI_{2n+1,\mu}(q)$ is $q^n$. Hence it follows 
that 
$$\sum_{s=1}^n \sum_{\substack{A \in \mathcal{IC}_s,\\NT_\mu(A)=k}}
{n-1\choose s-1} = \sum_{s=1}^{2k+1} \sum_{\substack{A \in \mathcal{IC}_s,\\NT_\mu(A)=k}}
{n-1\choose { s-1}}.$$ 

Furthermore, the maximum number of nontrivial occurrences of $\mu$ that one can 
have for a cycle of length $s$ is ${s-1 \choose 2}$, (see page \pageref{maxcycle}). This means that 
for a cycle of length $s$ to have $k$ nontrivial occurrences, we must 
have that $\frac{(s-1)(s-2)}{2} \geq k$ or, equivalently, 
$s^2 -3s+2-2k \geq 0$.  It follows that 
it must be the case that
$s\geq\frac{3+\sqrt{1+8k}}{2}$. Hence, 
\begin{equation}
\sum_{ s=1}^{2k+1} \sum_{\substack{A \in \mathcal{IC}_s,\\NT_\mu(A)=k}}
{n-1\choose  s-1}=\sum_{ s=\left\lfloor\frac{3+\sqrt{1+8k}}{2}\right\rfloor}^{2k+1} \sum_{\substack{A \in \mathcal{IC}_s,\\NT_\mu(A)=k}}
{n-1\choose  s-1}.
\end{equation}
 
Thus we have the following theorem.  
\begin{theorem}
For any $k \geq 0$ and $n \geq 1$, 
\begin{equation}
NT_{n,\mu}(q)|_{q^k} = \sum_{s={\left\lfloor\frac{3+\sqrt{1+8k}}{2}\right\rfloor}}^{2k+1} c_s \binom{n-1}{s-1}
\end{equation}
where $c_s = NTI_{s,\mu}(q)|_{q^k}$.
\end{theorem}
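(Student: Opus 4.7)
The plan is to derive this as an essentially immediate consequence of Theorem \ref{thm:computeC}, together with two bounds on the summation index $s$ that were previewed in the paragraphs leading up to the statement.

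First I would invoke Theorem \ref{thm:computeC} to write
$$NT_{n,\mu}(q)|_{q^k} = \sum_{s=1}^n \binom{n-1}{s-1} \cdot \bigl|\{A \in \mathcal{IC}_s : NT_\mu(A) = k\}\bigr|.$$
By the very definition of the polynomial $NTI_{s,\mu}(q)$, the inner cardinality equals $c_s = NTI_{s,\mu}(q)|_{q^k}$, so the task reduces to showing that $c_s$ vanishes outside the stated range of $s$ and rewriting the sum with these restricted bounds.

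For the upper bound $s \leq 2k+1$, I will assume (and forward-cite to Section \ref{incont}) the claim that the smallest power of $q$ appearing in either $NTI_{2n,\mu}(q)$ or $NTI_{2n+1,\mu}(q)$ is $q^n$. Granted this, whenever $s \geq 2k+2$ the exponent $k$ lies strictly below the minimum exponent occurring in $NTI_{s,\mu}(q)$, forcing $c_s = 0$. For the lower bound, I rely on an elementary upper bound on $NT_\mu$: for any $s$-cycle, the number of nontrivial $\mu$-matches is at most $\binom{s-1}{2}$, as observed on page \pageref{maxcycle} with equality attained by $(1,s,s-1,\dots,2)$. Hence $c_s = 0$ whenever $\binom{s-1}{2} < k$, i.e., whenever $s^2 - 3s + 2 - 2k < 0$. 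Solving this quadratic inequality in $s$ gives $s < \frac{3+\sqrt{1+8k}}{2}$, so nonzero contributions require $s \geq \left\lfloor \frac{3+\sqrt{1+8k}}{2}\right\rfloor$.

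Combining these two bounds truncates the outer sum to the index range claimed, and substituting $c_s$ for the inner cardinality gives the desired identity. The only genuinely nontrivial ingredient is the forward-cited bound on the minimum exponent of $NTI_{2n,\mu}(q)$ and $NTI_{2n+1,\mu}(q)$; everything else is bookkeeping, a definitional identification, and a quadratic inequality. Accordingly, I expect no obstacle in this section beyond discharging this debt later in Section \ref{incont}.
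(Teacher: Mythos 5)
Your proposal is correct and follows essentially the same route as the paper: invoke Theorem \ref{thm:computeC}, identify the inner count with $NTI_{s,\mu}(q)|_{q^k}$, truncate above at $s\le 2k+1$ via the forward-cited minimum-degree fact for $NTI_{2n,\mu}(q)$ and $NTI_{2n+1,\mu}(q)$, and truncate below via the bound $NT_\mu(A)\le\binom{s-1}{2}$ and the resulting quadratic inequality. No substantive difference from the paper's argument.
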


It follows from our tables 
for $NTI_{n,\mu}(q)$ and the fact that $NTI_{11,\mu}(q)|_{q^5} = C_5 =42$ 
which we will prove in Corollary \ref{cor:Cat} 
that for all $n \geq 1$
\begin{eqnarray*}
NT_{n,\mu}(q)|_{q} &=& \binom{n-1}{2}, \\
NT_{n,\mu}(q)|_{q^2} &=& \binom{n-1}{3} + 2\binom{n-1}{4}, \\
NT_{n,\mu}(q)|_{q^3} &=& \binom{n-1}{3} + 3\binom{n-1}{4}  +6\binom{n-1}{5} 
+ 5\binom{n-1}{6}, \\
NT_{n,\mu}(q)|_{q^4} &=& 2\binom{n-1}{4} + 13\binom{n-1}{5}  +27\binom{n-1}{6} 
+ 29\binom{n-1}{7}+14\binom{n-1}{8},\   \mbox{and} \\  
NT_{n,\mu}(q)|_{q^5} &=& \binom{n-1}{4} + 10\binom{n-1}{5}  +51\binom{n-1}{6} 
+ 134\binom{n-1}{7}+\\
&&181\binom{n-1}{8} +130\binom{n-1}{9} + 42\binom{n-1}{10}.
\end{eqnarray*}

\section{Incontractible $n$-cycles}\label{incont}

Let $IC_n$ denote the number of incontractible $n$-cycles 
in $\mathcal{C}_n$.  Clearly, there is only 1 incontractible 3-cycle, 
namely, $(1,3,2)$ and there are only 2 incontractible 4-cycles, 
namely, $(1,4,3,2)$ and $(1,3,2,4)$. Our next theorem 
shows the numbers $IC_n$ satisfies the recursion of 
the number of derangements. 
\begin{theorem} \label{thm:ICrec}
For all $n \geq 5$, 
\begin{equation}\label{ICnrec}
{IC}_n = (n-2) {IC}_{n-1} + (n-2) {IC}_{n-2}.
\end{equation}
\end{theorem}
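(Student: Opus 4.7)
The plan is to give a direct bijective proof by partitioning $\mathcal{IC}_n$ into two pieces according to the local neighborhood of the element $n$, and to match these pieces with $\mathcal{IC}_{n-1}\times\{1,\ldots,n-2\}$ and $\mathcal{IC}_{n-2}\times\{1,\ldots,n-2\}$ respectively. I would first note that for any $C\in\mathcal{IC}_n$, letting $y$ and $x$ be the predecessor and successor of $n$ in $C$, incontractibility forces $y\neq n-1$, so $y\in\{1,\ldots,n-2\}$; the natural dichotomy is then Case A where $x\neq y+1$ and Case B where $x=y+1$.

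For Case A, I would show that deleting $n$ from $C$ yields an $(n-1)$-cycle $C'$ on $\{1,\ldots,n-1\}$ whose only new adjacency $(y,x)$ is not a bond, so $C'\in\mathcal{IC}_{n-1}$. For the reverse direction, starting from any $C'\in\mathcal{IC}_{n-1}$, I would insert $n$ at an edge of $C'$ and observe that the resulting $n$-cycle is in $\mathcal{IC}_n$ of Case-A type iff the predecessor at the chosen edge is not $n-1$; this rules out exactly one of the $n-1$ edges of $C'$, so Case A contributes $(n-2)\,IC_{n-1}$.

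For Case B, deleting $n$ from $C$ yields a cycle $C'$ on $\{1,\ldots,n-1\}$ with a unique bond $(y,y+1)$, and contracting via the operation of Section~\ref{cont} gives $A=\cont(C')\in\mathcal{IC}_{n-2}$, with $y$ retained as extra data. For the inverse, given $A\in\mathcal{IC}_{n-2}$ and $y\in\{1,\ldots,n-2\}$, I would shift every element $\ge y$ of $A$ upward by one to form $\tilde A$ on $\{1,\ldots,y-1,y+1,\ldots,n-1\}$, insert $y$ immediately before $y+1$ in $\tilde A$ to form $A^*$, and finally insert $n$ between $y$ and $y+1$ in $A^*$ to obtain $C$. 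The hard part will be to verify that $A^*$ really has exactly one bond so that $\cont(A^*)=A$; this reduces to two checks: a short four-case analysis on consecutive pairs shows the shift $A\mapsto\tilde A$ preserves incontractibility, and the only other potential bond created by inserting $y$ is $(y-1,y)$, which would force the predecessor of $y$ in $A$ to be $y-1$, contradicting $A\in\mathcal{IC}_{n-2}$. Once $A^*$ has its unique bond at $(y,y+1)$, inserting $n$ there breaks that bond and cannot create new ones because $y\le n-2$, so $C\in\mathcal{IC}_n$. This would show Case B contributes $(n-2)\,IC_{n-2}$, and combining the two cases yields $IC_n=(n-2)\,IC_{n-1}+(n-2)\,IC_{n-2}$.
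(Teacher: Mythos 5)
Your proposal is correct and follows essentially the same route as the paper: your Case A/Case B dichotomy on whether the successor of $n$ equals one plus its predecessor is exactly the paper's dichotomy on whether deleting $n$ leaves an incontractible $(n-1)$-cycle, and your two inverse constructions coincide with the paper's insertion of $n$ after $i$ for $i\le n-2$ and its ``expansion'' operation $D^{[i]}$. You merely spell out more explicitly the incontractibility checks that the paper labels as easy to see.
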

\begin{proof}
Let $[n] = \{1, \ldots, n\}$. 
Suppose that $n \geq 5$ and $C = (1,c_2, \ldots, c_n)$ is {an} $n$-cycle 
in $\mathcal{IC}_n$. Let $C\upharpoonright_{[n-1]} = 
(1,c_2^\prime, \ldots, c_{n-1}^\prime)$ be the cycle obtained from $C$ by 
removing $n$ from $C$. For example if $C=(1,3,6,2,4,5)$ then $C\upharpoonright_{[5]}=(1,3,2,4,5)$. We then have 
two cases depending on whether  $C\upharpoonright_{[n-1]}$ is 
incontractible or not.
 \\
\ \\
{\bf Case 1.} $ C\upharpoonright_{[n-1]} \in \mathcal{IC}_{n-1}$. \\
In this situation, it is easy to see that if 
$D = C\upharpoonright_{[n-1]}$, there are exactly $n-2$ cycles 
$C' \in \mathcal{IC}_n$ such that $D = C'\upharpoonright_{[n-1]}$.
These cycles are the result of inserting $n$ immediately 
after $i$ in $D$ for $i =1, \ldots, n-2$. That is, 
let $D^{(i)}$ be the result of inserting $n$ immediately 
after $i$ in the cycle structure of $D$ where $1 \leq i \leq n-2$. 
Then it is easy to see that $D^{(i)} \in \mathcal{IC}_n$ and 
$D = D^{(i)}\upharpoonright_{[n-1]}$. 
For example, if $D = (1,3,5,4,2)$ is the 5-cycle pictured 
at the top of Figure \ref{fig:insertn}, then 
$D^{(1)}, \ldots, D^{(4)}$ are pictured on the second row 
of Figure \ref{fig:insertn}, reading from left to right. 
Thus, there are $(n-2)$ ${{IC}}_{n-1}$ 
$n$-cycles $C \in \mathcal{IC}_n$ such that $C\upharpoonright_{[n-1]}$ is an 
element of $\mathcal{IC}_{n-1}$. (Note that $D^{(n-1)}$ is not incontractible 
because it is obtained by inserting $n$ directly after $n-1$.)

\fig{insertn}{The four elements of $\mathcal{IC}_6$ that 
arise by inserting $6$ into $C =(1,3,5,4,2)$.}

We note that if $D =(1=d_1, \ldots, d_{n-1})$ is an ${(}n-1{)}$-cycle 
in $\mathcal{IC}_{n-1}$, then 
$$NT_{\mu}(D^{(i)}) \geq 1 + NT_{\mu}(D) \ \mbox{for} \ i =1, \ldots, n-2.$$
That is, {if} $\langle d_s, d_t \rangle$ is a nontrivial occurrence of $\mu$ 
in 
$D$, then the insertion of $n$ does not effect whether 
$\langle d_s, d_t \rangle$ is a nontrivial occurrence of $\mu$ in $D^{(i)}$. 
Moreover, we are always guaranteed that $\langle i, n \rangle$ 
is a nontrivial  occurrence of $\mu$ in $D^{(i)}$ since $i \leq n-2$. 
It is possible that $NT_{\mu}(D^{(i)}) -NT_{\mu}(D)$ is  greater 
than or equal to 1.  For example, in Figure \ref{fig:insertn}, 
it is easy to see that if $D = (1,3,5,4,2)$, then there is 
only two pairs which are nontrivial occurrences of $\mu$ in $C$, namely $\langle 1,3\rangle$ and $\langle 3, 5 \rangle$, 
while in $D^{(1)}$, the insertion of $6$ after 
1 created three new nontrivial occurrences of $\mu$  in $D^{(1)}$ namely, 
the pairs 
$\langle 1,6 \rangle$, $ \langle 2,6 \rangle$, and 
$ \langle 4,6 \rangle$. Thus, $NT_{\mu}(D^{(1)}) - NT_{\mu}(D) = 5-2 =3$. 
On the other hand, it is easy to see that if 
$D \in \mathcal{IC}_{n-1}$, then 
\begin{equation}\label{insertnn-2}
NT_{\mu}(D^{(n-2)}) = 1 + NT_{\mu}(D).
\end{equation}
That is, 
if we insert $n$ immediately after $n-2$ in $D$, then for $i =1, \ldots, 
n-3$, 
$\langle i, n \rangle$ cannot be a nontrivial {occurrence} of $\mu$ in 
$D^{(n-2)}$ because 
$n-2$ will be between $i$ and $n$ in $D$. Thus, we will create 
exactly one more pair which is a 
nontrivial occurrence of $\mu$ in $D^{(n-2)}$ that was not 
in $D$, namely, $\langle n-2,n\rangle$. \\
\ \\
{\bf Case 2} $C\upharpoonright_{[n-1]} \not \in \mathcal{IC}_n$. \\
In this case, it must be that in $C$, $n$ is the only element 
between $i$ and $i+1$ in $C$ for some $i$ so that 
in $C\upharpoonright_{[n-1]}$, $i+1$ immediately follows $i$. Clearly, 
$i$ is the only $j$ in $C\upharpoonright_{[n-1]}$ such that
$j+1$ immediately follows $j$. 
Hence, we can construct an element 
of $D \in \mathcal{IC}_{n-2}$ from  $C\upharpoonright_{[n-1]}$ by removing 
$i+1$ and then replacing $j$ by $j-1$ for $i+1 <  j \leq n-1$. 
Vice versa, given $D \in \mathcal{IC}_{n-2}$ and $1 \leq i \leq n-2$, 
let $D^{[i]}$ be the cycle that results from $D$ by first replacing 
elements $j \geq i+1$, by $j+1$, then replacing $i$ by a pair 
$i$ immediately followed by $i+1$, and finally inserting 
$n$ between $i$ and $i+1$.  This process is pictured in 
Figure \ref{fig:insertn-2} for the cycle $D = (1,3,2,4)$.  
In such a situation, we shall say that $D^{[i]}$ arises by 
expanding $D$ at $i$.

\fig{insertn-2}{The four elements of $\mathcal{IC}_6$ that 
arise by expanding $D =(1,3,2,4)$.}

We should note that if $D =(1=d_1, \ldots, d_{n-2})$ is an $(n-2)$-cycle 
in $\mathcal{IC}_{n-2}$, then 
$$NT_{\mu}(D^{[i]}) \geq 1 + NT_{\mu}(D) \ \mbox{for} \ i =1, \ldots, n-2.$$
In the first step of the expansion of $D$ at $i$, we replace each 
$j \geq i+1$ by $j+1$ and then replace $i$ by a consecutive pair $i$ followed 
by $i+1$ to get a cycle $D_i$. It is clear that this keeps the 
number of nontrivial occurrences of $\mu$ 
 the same.  That is, it is easy to see 
that there will be  no pair $\langle i, t\rangle$ which is a 
nontrivial occurrence of $\mu$ in $D_i$ since $i$ is immediately followed 
by $i+1$ in $D_i$. Moreover, it is easy to check that 
\begin{enumerate}
\item if $s < t \leq i$, then $\lan s, t \ran$ is a nontrivial 
occurrence of $\mu$  in $D$ if and only if 
$\lan s, t \ran$ is a nontrivial 
occurrence of $\mu$  in $D_i$,

\item if $s < i < t$, then 
$\langle s,t \rangle$ is a nontrivial 
occurrence of $\mu$  in  $D$ if and only if $\langle s,t+1 \rangle$ 
is a nontrivial 
occurrence of $\mu$  in $D_i$.

\item $\langle i,t \rangle$ is a nontrivial 
occurrence of $\mu$  
in $D$ if and only if $\langle i+1,t+1 \rangle$ is a nontrivial 
occurrence of $\mu$  in  $D_i$, 
and 

\item if $i < s < t$, then $\langle s,t \rangle$ is a nontrivial 
occurrence of $\mu$  in $D$ if and only if $\langle s+1,t+1 \rangle$ 
is a nontrivial 
occurrence of $\mu$  in $D_i$.
\end{enumerate}

Then for every pair $\langle r, s \rangle$ 
which is a nontrivial 
occurrence of $\mu$  in $D_i$, there exists a nontrivial occurrence of $\mu$ in $D^{[i]}$ after
inserting $n$ between $i$ and $i+1$. Finally we will create 
at least one new nontrivial 
occurrence of $\mu$  in $D$, namely $\langle i, n \rangle$.

Again it is possible that $NT_{\mu}(D^{[i]}) -NT_{\mu}(D)$ is greater 
than or equal to 1.  For example, in Figure \ref{fig:insertn-2}, 
it is easy to see that if $D = (1,3,5,4,2)$, there are
only two pairs that are nontrivial occurrences of $\mu$ in $D$, namely $\langle 1,3\rangle$ 
and $\langle 2, 4 \rangle$, which correspond to the 
pairs $\langle 1,4 \rangle$ and $\langle 3, 5 \rangle$ 
that are nontrivial occurrences of $\mu$ in $D^{[2]}$. However, the insertion of 6 between 
2 and 3 in $D_2$ created two new nontrivial occurrences of $\mu$ 
in $D^{[2]}$, 
namely, $\langle 2,6 \rangle$ and $\langle 4,6 \rangle$.
Thus, $NT_{\mu}(D^{[2]}) - NT_{\mu}(D) = 4-2 =2$. 
On the other hand, it is easy to see that if 
$D \in \mathcal{IC}_{n-2}$, then 
\begin{equation}\label{2insertnn-2}
NT_{\mu}(D^{[n-2]}) = 1 + NT_{\mu}(D).
\end{equation} 
That is, 
if we insert $n$ immediately after $n-2$ in $D_{n-2}$, then for $i =1, \ldots, 
n-3$, 
$\langle i, n \rangle$ cannot be a nontrivial occurrences of $\mu$
  in $D^{[n-2]}$ because 
$n-2$ will be between $i$ and $n$ in $D^{[n-2]}$. Thus, we will create 
exactly one occurrence of $\mu$ in $D^{[n-2]}$ that was not 
in $D_{n-2}$, namely, $\langle n-2,n\rangle$. \\

\end{proof}

We have two important corollaries to our proof of Theorem \ref{thm:ICrec}.

\begin{corollary} For all $n \geq 2$, ${{IC}}_n = D_{n-1}$ where 
$D_n$ is the number of derangements of $S_n$.
\end{corollary}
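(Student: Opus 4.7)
The plan is to show that both sides of the claimed identity satisfy the same recurrence with the same initial conditions. Recall the standard recurrence for the derangement numbers,
\[
D_n = (n-1)(D_{n-1} + D_{n-2}), \qquad n \geq 2,
\]
with $D_0 = 1$ and $D_1 = 0$. Re-indexing by replacing $n$ with $n-1$ gives
\[
D_{n-1} = (n-2)(D_{n-2} + D_{n-3}), \qquad n \geq 3.
\]
This is precisely the recurrence established for $IC_n$ in Theorem~\ref{thm:ICrec} (for $n \geq 5$), with $IC_{n-1}$ playing the role of $D_{n-2}$ and $IC_{n-2}$ the role of $D_{n-3}$.

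Next I would verify the base cases by direct enumeration. For $n=2$, the unique $2$-cycle $(1,2)$ contains the bond $(1,2)$, so $IC_2 = 0 = D_1$. For $n=3$, the only incontractible cycle is $(1,3,2)$, so $IC_3 = 1 = D_2$. For $n=4$, the incontractible cycles are $(1,4,3,2)$ and $(1,3,2,4)$, so $IC_4 = 2 = D_3$. A sanity check at $n=5$ via Theorem~\ref{thm:ICrec} gives $IC_5 = 3(IC_4 + IC_3) = 3(2+1) = 9 = D_4$, as expected.

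Finally, a straightforward induction on $n$, using the matching recurrences for $n \geq 5$ together with the agreement of the initial values at $n=2,3,4$, yields $IC_n = D_{n-1}$ for all $n \geq 2$. There is no substantive obstacle here: the entire content of the corollary is already packaged in Theorem~\ref{thm:ICrec}; the only task is to line up the recurrence with that of the derangement numbers and check a handful of small cases.
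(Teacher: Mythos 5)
Your proof is correct and follows essentially the same route as the paper: identify the recurrence of Theorem~\ref{thm:ICrec} with the shifted derangement recurrence and check the small initial values. You are in fact slightly more careful than the paper's own two-line argument, since you explicitly verify the base case $IC_4 = 2 = D_3$, which is needed because the recurrence only kicks in at $n=5$.
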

\begin{proof} It is well-known that $D_1 =0 = {IC}_2$, 
$D_2 = 1 = {IC}_3$, and that for $n \geq 2$, \\
$D_{n+1} = n D_{n-1}+ nD_{n-2}$. Thus, the corollary follows 
by recursion. 
\end{proof}

\begin{corollary} For all $n \geq 2$, the lowest power 
of $q$ appearing in $NTI_{2n,\mu}(q)$ and \\ 
$NTI_{2n+1,\mu}(q)$ is $q^n$. 
\end{corollary}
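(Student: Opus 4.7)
The plan is to prove the corollary by induction on $n$, leveraging two ingredients already furnished by the proof of Theorem \ref{thm:ICrec}. I must verify two things for each $n$: a lower bound $NT_\mu(C)\geq n$ for every $C\in\mathcal{IC}_{2n}\cup\mathcal{IC}_{2n+1}$, and the attainment of the value $n$ in both $NTI_{2n,\mu}(q)$ and $NTI_{2n+1,\mu}(q)$.

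For the base case $n=2$, I will read the result directly from Table \ref{tab:poly}: the polynomials $NTI_{4,\mu}(q)=q^2+q^3$ and $NTI_{5,\mu}(q)=2q^2+3q^3+2q^4+q^5+q^6$ each have lowest power $q^2$. For the inductive step, assume the claim for $n-1$. The proof of Theorem \ref{thm:ICrec} established that every $C\in\mathcal{IC}_m$ with $m\geq 5$ is either $D^{(i)}$ for some $D\in\mathcal{IC}_{m-1}$ or $D^{[i]}$ for some $D\in\mathcal{IC}_{m-2}$, and that in both cases $NT_\mu(C)\geq 1+NT_\mu(D)$. I apply this first with $m=2n$: the reduction $D$ lies in $\mathcal{IC}_{2(n-1)+1}$ or $\mathcal{IC}_{2(n-1)}$, so by induction $NT_\mu(D)\geq n-1$ and hence $NT_\mu(C)\geq n$. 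Then I apply it with $m=2n+1$: either $D\in\mathcal{IC}_{2n}$, for which the bound $NT_\mu(D)\geq n$ has just been proved in the $m=2n$ step, giving $NT_\mu(C)\geq n+1$; or $D\in\mathcal{IC}_{2(n-1)+1}$, for which $NT_\mu(D)\geq n-1$ by the inductive hypothesis, giving $NT_\mu(C)\geq n$. Either way, $NT_\mu(C)\geq n$.

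For attainment, I will invoke equation (\ref{2insertnn-2}) from the proof of Theorem \ref{thm:ICrec}, namely $NT_\mu(D^{[m-2]})=1+NT_\mu(D)$ for any $D\in\mathcal{IC}_{m-2}$. By the inductive hypothesis, choose $D\in\mathcal{IC}_{2n-2}=\mathcal{IC}_{2(n-1)}$ with $NT_\mu(D)=n-1$; then $D^{[2n-2]}\in\mathcal{IC}_{2n}$ satisfies $NT_\mu(D^{[2n-2]})=n$. Similarly, choose $D'\in\mathcal{IC}_{2n-1}=\mathcal{IC}_{2(n-1)+1}$ with $NT_\mu(D')=n-1$; then $(D')^{[2n-1]}\in\mathcal{IC}_{2n+1}$ satisfies $NT_\mu((D')^{[2n-1]})=n$. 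This completes the induction.

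There is no real obstacle here: all of the combinatorial work, both the inequality $NT_\mu(C)\geq 1+NT_\mu(D)$ for each of the two build-up operations and the sharpened identity (\ref{2insertnn-2}), was already delivered inside the proof of Theorem \ref{thm:ICrec}. The only mildly delicate point is the parity bookkeeping: one must run the induction so that both the even case $m=2n$ and the odd case $m=2n+1$ are closed simultaneously, since the odd case at level $n$ calls on the freshly-proved even case at level $n$, while the even case relies purely on level $n-1$.
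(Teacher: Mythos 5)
Your proof is correct and follows essentially the same route as the paper's: the same induction with the even/odd parity bookkeeping, the same lower bound via the decomposition of each incontractible cycle as $D^{(i)}$ or $D^{[i]}$ with $NT_\mu(C)\geq 1+NT_\mu(D)$, and attainment via the exact ``$+1$'' identities established inside the proof of Theorem \ref{thm:ICrec}. The only cosmetic difference is that for the even-length case you realize the minimum by expanding an element of $\mathcal{IC}_{2n-2}$ via $D^{[2n-2]}$, whereas the paper inserts the largest element into an element of $\mathcal{IC}_{2n+1}$ via the $D^{(\cdot)}$ operation; both are valid.
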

\begin{proof}
We have shown by direct calculation that the lowest power of 
$q$ appearing in $NTI_{4,\mu}(q)$ and $NTI_{5,\mu}(q)$ is $q^2$. Thus, 
the corollary holds for $n =2$. Now, assume the corollary holds 
for $n \geq 2$.  Then we shall show the corollary holds 
for $n+1$.  We have shown that  each $(2n+2)$-cycle $C$ in 
$\mathcal{IC}_{2n+2}$ is either of the form 
$D^{(i)}$ for some $D \in \mathcal{IC}_{2n+1}$ and $1 \leq i \leq 2n$ in which 
case $NT_{\mu}(D^{(i)}) \geq 1 + NT_{\mu}(D)$, or of the form 
$E^{[i]}$ for some $E \in \mathcal{IC}_{2n}$ and $1 \leq i \leq 2n$ in which 
case $NT_{\mu}(E^{[i]}) \geq 1 + NT_{\mu}(E)$. 
But by induction, we know that $NT_{\mu}(D) \geq n$ and 
$NT_{\mu}(E) \geq n$ so that $NT_{\mu}(C) \geq n+1$. Thus, the smallest 
possible power of $q$ that can appear in $NTI_{2n+2,\mu}(q)$ is $n+1$. 
On the other hand, we can assume by induction that there is 
a $D \in \mathcal{IC}_{2n+1}$ such that 
$NT_{\mu}(D) = n$ in which case we know 
that $NT_{\mu}(D^{(n-2)}) = 1 + NT_{\mu}(D) =n+1$. Hence, the coefficient 
of $q^{n+1}$ in $NT_{2n+2,\mu}(q)$ is non-zero.

We have shown that  each $(2n+3)$-cycle $C$ in 
$\mathcal{IC}_{2n+3}$ is either of the form 
$D^{(i)}$ for some $D \in \mathcal{IC}_{2n+2}$ and $1 \leq i \leq 2n+1$ in which case $NT_{\mu}(D^{(i)}) \geq 1 + NT_{\mu}(D)$, or of the form 
$E^{[i]}$ for some $E \in \mathcal{IC}_{2n+1}$ and $1 \leq i \leq 2n+1$ 
in which 
case $NT_{\mu}(E^{[i]}) \geq 1 + NT_{\mu}(E)$. 
But by induction, we know that $NT_{\mu}(D) \geq n+1$ and 
$NT_{\mu}(E) \geq n$ so that $NT_{\mu}(C) \geq n+1$. Thus, the smallest 
possible power of $q$ that can appear in $NTI_{2n+3,\mu}(q)$ is $n+1$. 
On the other hand, we can assume by induction that there is 
a $D \in \mathcal{IC}_{2n+1}$ such that 
$NT_{\mu}(D) = n$ in which case we know 
that $NT_{\mu}(D^{[n-2]}) = 1 + NT_{\mu}(D) =n+1$. Hence, the coefficient 
of $q^{n+1}$ in $NT_{2n+3,\mu}(q)$ is non-zero. 
\end{proof}

We have not been able to find a recursion for the 
polynomials $NTI_{n,\mu}(q)$. The problem with the recursion 
implicit in the proof of Theorem \ref{thm:ICrec} is that
for $n$-cycles $D \in \mathcal{IC}_n$, the contributions of 
$\sum_{i\geq 1} q^{NT_{\mu}(D^{(i)})}$ are not uniform. 
For example, if $D = (1,3,5,4,2)$, then 
$NT_{\mu}(D) = 2$, $NT_{\mu}(D^{(1)}) = 5$, $NT_{\mu}(D^{(2)}) = 4$, $NT_{\mu}(D^{(3)}) = 4$, and $NT_{\mu}(D^{(4)}) = 3$ so that 
$\sum_{i=1}^4 q^{NT_{\mu}(D^{(i)})} =(q +2q +q^3)q^{NT_{\mu}(D)}$. 
However, $D = (1,5,4,3,2)$, then 
$NT_{\mu}(D) = 6$, $NT_{\mu}(D^{(1)}) = 10$, $NT_{\mu}(D^{(2)}) = 9$, $NT_{\mu}(D^{(3)}) = 8$, and $NT_{\mu}(D^{(4)}) = 7$ so that 
$\sum_{i=1}^4 q^{NT_{\mu}(D^{(i)})} =(q +q^2+q^3 +q^4)q^{NT_{\mu}(D)}$. 
A similar phenomenon occurs for $\sum_{i\geq 1} NT_{\mu}(D^{[i]})$. 
For example, it is easy to see from Figure 6 that if 
$D= (1,3,2,4)$, then $NT_{\mu}(D) = 2$, $NT_{\mu}(D^{[1]}) = 3$, 
$NT_{\mu}(D^{[2]}) = 4$,  $NT_{\mu}(D^{[3]}) = 3$,  and 
$NT_{\mu}(D^{[4]}) = 3$, so that 
$\sum_{i=1}^4 q^{NT_{\mu}(D^{[i]})} =(3q+q^2)q^{NT_{\mu}(D)}$. 
However, as one can see from Figure \ref{fig:2insertn-2} below 
that if $D=(1,4,3,2)$, then $NT_{\mu}(D) = 3$, $NT_{\mu}(D^{[1]}) = 6$, 
$NT_{\mu}(D^{[2]}) = 5$,  $NT_{\mu}(D^{[3]}) = 4$,  and 
$NT_{\mu}(D^{[4]}) = 4$, so that 
$\sum_{i=1}^4 q^{NT_{\mu}(D^{[i]})} =(2q+q^2+q^3)q^{NT_{\mu}(D)}$.

\fig{2insertn-2}{The four elements of $\mathcal{IC}_6$ that 
arise by expanding $D =(1,4,3,2)$.}

Our next goal is to show that $NTI_{2n+1,\mu}(q)|_{q^n} = C_n$ 
where $C_n$ is the $n$th Catalan number. 

\begin{theorem} \label{thm:chpath}
Suppose that $\sg = \sg_1 \ldots \sg_{2n+1} \in 
S_{2n+1}$ and {\rm $\mbox{chpath}(\sg)=P$} is a Dyck path of length $2n$. Then  
the $n$-cycle $C_\sg = (\sg_1, \ldots, \sg_{2n+1})$  is incontractible and  
$NT_{\mu}(C) = n$. 
\end{theorem}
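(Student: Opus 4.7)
My plan is to prove the two conclusions---incontractibility of $C_\sigma$ and $NT_\mu(C_\sigma)=n$---in turn. For incontractibility I argue by contradiction: if $i$ is immediately followed by $i+1$ in $C_\sigma$, then, since $\sigma_1=1$ rules out the wraparound step $\sigma_{2n+1}\to\sigma_1$ producing such a bond, there must exist $j<2n+1$ with $\sigma_j=i$ and $\sigma_{j+1}=i+1$. Writing $p(w)$ for the position of $w$ in $\sigma$, one has $p(i+1)=j+1>j=p(i)$, which forces $D(i+1)=0$ and therefore $\ind(i+1)=\ind(i)$, so the chpath step $j\to j+1$ is horizontal---impossible in a Dyck path. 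For the lower bound $NT_\mu(C_\sigma)\geq n$, each of the $n$ up-steps of $P$ occurs at some position $i$ with $\sigma_i<\sigma_{i+1}$ and $\sigma_{i+1}\neq\sigma_i+1$ (the latter because any bond would create a horizontal step), so $\langle\sigma_i,\sigma_{i+1}\rangle$ is a nontrivial $\mu$-match whose clockwise arc in $C_\sigma$ is empty.

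The main work is the matching upper bound, for which I will show that every nontrivial $\mu$-match of $C_\sigma$ already appears as such an up-step pair $\langle\sigma_i,\sigma_{i+1}\rangle$. Suppose otherwise that $\langle u,v\rangle=\langle\sigma_a,\sigma_b\rangle$ is a nontrivial $\mu$-match with $b\not\equiv a+1\pmod{2n+1}$. Then the clockwise arc from $a$ to $b$ is nonempty, every $\sigma_s$ on it satisfies $\sigma_s\leq u$ or $\sigma_s\geq v$, and the values $u+1,\ldots,v-1$ all sit in the complementary arc. Writing $h_a=\ind(u)$ and $h_b=\ind(v)$, one has $h_b\geq h_a$ because $\ind$ is monotone. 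The structural observation I rely on is that any interior arc position with value $\leq u$ has chpath height $\leq h_a$, and any with value $\geq v$ has height $\geq h_b$; so no interior arc position has chpath height strictly between $h_a$ and $h_b$.

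I then split into cases. If $h_b-h_a\geq 2$, the sub-chpath on the arc is a $\pm 1$ lattice path that must pass through each intermediate integer height, contradicting the structural observation. If the arc is linear ($a<b$) with $h_b=h_a$, the vanishing of all $D(w)$ on $w\in(u,v]$ produces a nondecreasing chain $p(u+1)\leq\cdots\leq p(v)=b$; but the positions of $u+1,\ldots,v-1$ are excluded from $[a,b]$ and must therefore lie to the right of $b$, contradicting $p(v-1)\leq b$. If the arc wraps ($a>b$), the values $u+1$ and $v-1$ both sit in the open interval $(b,a)$, which forces $D(u+1)=D(v)=1$ and hence $h_b-h_a\geq 2$; applying the intermediate-value argument to the sub-chpath on $[1,b]$ (which starts at $h_1=0$) then completes the contradiction. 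The one remaining---and most delicate---subcase is a linear arc with $h_b=h_a+1$, where I plan to locate the unique $w^\ast\in(u,v]$ with $D(w^\ast)=1$ and use the rigid chain structure on $p(u+1),\ldots,p(v)$ together with the positions of values $w$ having $\ind(w)=h_a+1$ inside $[2,a-1]$ or $[b+1,2n+1]$ to rule out any Dyck configuration; this final subcase is the main obstacle in the plan.
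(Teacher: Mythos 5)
Your incontractibility argument, your lower bound, and three of your four upper-bound cases (height gap at least $2$, linear arc at equal heights, and wrapping arc) are correct, and they follow a genuinely different route from the paper: the paper fixes each position $i$ and shows $\sigma_i$ is the left element of exactly one nontrivial $\mu$-match when step $i$ is an up-step and of none when it is a down-step, whereas you fix a candidate non-adjacent pair $\langle u,v\rangle=\langle\sigma_a,\sigma_b\rangle$ and derive a contradiction from the heights $h_a,h_b$ of its endpoints. However, the subcase you yourself flag as unresolved --- a linear arc $a<b$ with $h_b=h_a+1$ --- is a genuine gap, and not a peripheral one: it is precisely the situation in which your structural observation (``no interior arc position has height strictly between $h_a$ and $h_b$'') is vacuous, so none of the machinery you set up applies, and your sketch (``locate the unique $w^\ast$ \dots\ and rule out any Dyck configuration'') never identifies where the contradiction comes from. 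As it stands the upper bound $NT_\mu(C_\sigma)\le n$ is not established.

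The case does close, but it needs one further property of charge that you have not isolated: for values $w<w'$ one has either $\ind_\sigma(w)<\ind_\sigma(w')$, or $\ind_\sigma(w)=\ind_\sigma(w')$ and $w$ occurs to the left of $w'$ in $\sigma$; equivalently, within a fixed index level the values increase from left to right. Granting this, let $s$ be the largest position in $[a,b-1]$ with $\ind_\sigma(\sigma_s)=h_a$. If $s>a$, then $\sigma_s$ has the same index as $u$ but lies to its right, so $\sigma_s>u$, while $\ind_\sigma(\sigma_s)=h_a<h_b=\ind_\sigma(v)$ forces $\sigma_s<v$; thus $\sigma_s$ is a value in $(u,v)$ sitting on the interior of the arc, contradicting that $\langle u,v\rangle$ is a $\mu$-match. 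If $s=a$, then every position in $(a,b]$ has index at least $h_a+1$ (otherwise the path would have to recross level $h_a$ before $b$, contradicting maximality of $s$), so $\ind_\sigma(\sigma_{a+1})=h_a+1=\ind_\sigma(v)$; since $a+1<b$ because the pair is non-adjacent, $\sigma_{a+1}$ lies to the left of $v$ at the same level, whence $u<\sigma_{a+1}<v$ with $\sigma_{a+1}$ on the arc interior, again a contradiction. With this argument supplied your proof is complete; without it the theorem's count is only bounded from below.
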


\begin{proof}
Since the path $P$ starts  at $(0,0)$, it follows that $\mbox{ind}_{\sg}(\sg_1) = 0$ . This can only happen if $\sg_1 =1$. 
Thus, $C_\sg =(1,\sg_2, \ldots, \sg_{2n+1})$ has the standard 
form of an $n$-cycle. Note that the only way $i+1$ immediately 
follows $i$ in $C_\sg$ is if there is a $j$ such that 
$\sg_j =i$ and $\sg_{j+1} =i+1$. But then 
$ind_\sg(\sg_j) = ind_\sg(\sg_{j+1})$ which would imply 
that the charge path of $\sg$ has a level which means that there is an edge drawn horizontally.  Since 
we are assuming that the charge path of $C_\sg$ is a Dyck path, 
there can be no such $i$ and, hence, $C_\sg$ is incontractible. 

Next we claim that 
if the $i$th step of $P$ is an up-step, then 
$\langle \sg_i, \sg_{i+1} \rangle$ is a nontrivial occurrence of 
$\mu$ in $C_{\sg}$. That is, if the 
$i$th step of $P$ is an up-step, then 
$\mbox{ind}_{\sg}(\sg_i) = k$ and $\mbox{ind}_{\sg}(\sg_{i+1}) = k+1$ 
for some $k \geq 0$. 
Since $P$ is a Dyck path, there must be some $j > i+1$ such 
that $\mbox{ind}_{\sg}(\sg_j) = k$ because we must pass through 
level $k$ from the point $(i+1,k+1)$ in $\mbox{chpath}(\sg)$ to 
get back to the point $(2n,0)$ which is the end point of 
the path. Let $i_j$ denote the smallest $j > i$ such 
that $\mbox{ind}_{\sg}(\sg_j) =k$.  It follows from 
the definition of the function $\mbox{ind}_\sg$ that 
it must be the case that $\sg_i = \ell$ and $\sg_{i_j} = \ell +1$. 
Moreover, it must be the case that $\sg_{i+1} > \ell +1$ so that 
$\langle \sg_i, \sg_{i+1} \rangle$  is a nontrivial occurrence of 
$\mu$ in $C_{\sg}$. We claim that there can be 
no $t \neq \sg_{i+1}$ such that $\lan \sg_i, t \ran$ is a nontrivial 
occurrence of $\mu$ in $C_\sg$. That is, 
since $P$ is a Dyck path, we know that all the vertices 
on $P$ between $(i,k)$ and $(i_j,k)$ lie on levels strictly 
greater than $k$. It is easy to see from our inductive definition of 
$\mbox{ind}_{\sg}$ that it must be the case that 
$\sg_{i+1}$ is the least element of $\sg_{i+1}, \sg_{i+2}, 
\ldots, \sg_{i_j-1}$ and hence the fact that $\sg_{i+1}$ immediately 
follows $\sg_i$ in $C_\sg$ implies that none of the 
pairs $\langle \sg_i, \sg_s \rangle$ 
 is a nontrivial occurrence of 
$\mu$ in $C_\sg$ for $i+1 < s < i_j$. But then as we traverse around 
the cycle $C_\sg$, the fact that $\sg_i = \ell$ and 
$\sg_{i_j} = \ell +1$ implies that none of the pairs 
$\langle \sg_i, \sg_s \ran$ where either $i_j < s \leq 2n+1$ or 
$1 \leq s < i$  are nontrivial occurrences of 
$\mu$ in $C_\sg$. Thus, 
if the $i$th step of $P$ is an up-step, then 
$\langle \sg_i, t\rangle$ is an occurrence of $\mu$ in $C_\sg$ for 
exactly one $t$, namely $t = \sg_{i+1}$. 

Next suppose that the $i$th step of $P$ is a down-step. Then, we claim
that there is no $t$ such that 
$\langle \sg_i, t\rangle$ is an occurrence of $\mu$ in $C_\sg$. 
We then have two cases. \\
\ \\
{\bf Case 1.} $\mbox{ind}_{\sg}(\sg_i) =k$ and there 
is a $j > i$ such that  $\mbox{ind}_{\sg}(\sg_j) =k$.\\
In this case, let $i_j$ be the least $j>i$ such  
that $\mbox{ind}_{\sg}(\sg_j) =k$. It then follows 
that if  $\sg_i = \ell$, then $\sg_{i_j} = \ell +1$. Because 
$P$ is a Dyck path, it must be the case that 
$\mbox{ind}_{\sg}(\sg_s) < k$ for all $i < s < i_j$ so 
that $\sg_s < \ell$ for all $i < s < i_j$. But this means 
that as we traverse around the cycle $C_\sg$, the first element 
that we encounter after $\sg_i = \ell$ which is bigger than 
$\ell$ is $\sg_{i_j} =\ell+1$ and 
hence there is no $t > \sg_i$ such that 
$\langle \sg_i,t \rangle$ is a nontrivial  occurrence of $\mu$ in $C_\sg$. \\
\ \\
{\bf Case 2.} $\mbox{ind}_{\sg}(\sg_i) =k$ and there 
is no $j > i$ such that  $\mbox{ind}_{\sg}(\sg_j) =k$.\\
In this case since $P$ is a Dyck path,  
all the elements $\sg_s$ such that $s > i$ 
must have $\mbox{ind}_{\sg}(\sg_s) < k$ and hence 
$\sg_s < \sg_i$ since, in a charge graph, all the elements whose 
index in $\sg$ is less than $k$ are smaller than all the elements 
whose index is equal to $k$ in $\sg$.  We now have two subcases. \\
\ \\
{\bf Subcase 2.1.} There is an $s < i$ such that $\mbox{ind}_{\sg}(\sg_s) = k+1$. \\
Let $s_j$ be the smallest $s < i$ such that $\mbox{ind}_{\sg}(\sg_s) = k+1$. Since $\sg_i$ was the rightmost 
element whose index relative to $\sg$ equals $k$, it follows 
that if $\sg_i =\ell$, then $\sg_{s_j} = \ell+1$ since it is the leftmost
 element  whose index relative to $\sg$ equals $k+1$. 
Moreover, for all $t < s_j$, either $\sg_t$ has index less than 
$k$ in $\sg$ or $\sg_t$ has index $k$ in $\sg$. In either case, 
our definition of $\mbox{ind}_{\sg}$ ensures that 
$\sg_t < \ell$. But this means 
that as we traverse around the cycle $C_\sg$, the first element 
that we encounter after $\sg_i = \ell$ which is bigger than 
$\ell$ is $\sg_{s_j} = \ell+1$ and 
hence there is no $t > \sg_i$ such that 
$\langle \sg_i,t \rangle$ is a nontrivial occurrence of $\mu$ in $C_\sg$. \\
\ \\
{\bf Subcase 2.2.} There is no $s < i$ such that 
$\mbox{ind}_{\sg}(\sg_s) = k+1$.\\
In this case, $\sg_i$ has the highest possible index in $\sg$ and it 
is the rightmost element whose  index in $\sg$ is $k$ which means 
that $\sg_i =2n+1$. Hence, there is no $t > \sg_i$ such that 
$\langle \sg_i,t \rangle$ is an occurrence of $\mu$ in $C_\sg$.\\
\ \\
The only other possibility is that we have not considered is that 
$i =2n+1$.  Since $P$ is a Dyck path, we know that 
$\mbox{ind}_{\sg}(\sg_{2n+1}) =0$, $\mbox{ind}_{\sg}(\sg_{1}) =0$,
and $\mbox{ind}_{\sg}(\sg_{2}) =1$. But then it follows 
that if $\sg_{2n+1} = \ell$, then $\sg_1 < \ell$ and 
$\sg_2 = \ell+1$. But this means 
that as we traverse around the cycle $C_\sg$, the first element 
that we encounter after $\sg_{2n+1} = \ell$ which is bigger than 
$\ell$ is $\sg_{2} = \ell+1$ and 
hence there is no $t > \sg_i$ such that 
$\langle \sg_{2n+1},t \rangle$ is a nontrivial occurrence of $\mu$ in $C_\sg$.

Thus, we have shown that the nontrivial occurrences of $\mu$ in 
 $C_\sg$ correspond 
to the up-steps of $\mbox{chpath}(\sg)$ and hence 
$NT_{\mu}(C_\sg) =n$. 
\end{proof}

Our next theorem will show that if $C = (1,\sg_2, \ldots, \sg_{2n+1})$ is a
$(2n+1)$-cycle in $\mathcal{IC}_{2n+1}$ where 
$NT_{\mu}(C) =n$, then the charge path 
of $\sg_C = 1 \sg_2 \ldots \sg_{2n+1}$ must be a Dyck path.

\begin{theorem} \label{thm:minchpath}
Let  $C = (1,\sg_2, \ldots, \sg_{2n+1}) \in \mathcal{IC}_{2n+1}$ and 
$\sg_C = 1 \sg_2 \ldots \sg_{2n+1}$. Then, if $NT_{\mu}(C) =n$, 
the charge path of $\sg_C$ must be a Dyck path.
\end{theorem}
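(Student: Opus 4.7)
The plan is to prove the lower bound $NT_\mu(C) \geq A_U$ where $A_U := \sum_{i:\, h_{i+1}>h_i}(h_{i+1} - h_i)$ is the total up-motion of the charge path of $\sg_C$, and then argue $A_U \geq n$ with equality exactly when $\mbox{chpath}(\sg_C)$ is a Dyck path. Combined with the hypothesis $NT_\mu(C) = n$, this forces the charge path to be a Dyck path.

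First I would rule out flat steps. Within any $\mbox{ind}_\sg$-block the values appear in $\sg$ in strictly increasing position order, a direct consequence of the inductive definition of $\mbox{ind}_\sg$. If $h_i = h_{i+1}$, then $\sg_i$ and $\sg_{i+1}$ belong to the same block, and the adjacency of positions $i$ and $i+1$ within that block forces $\sg_{i+1} = \sg_i + 1$---a bond, contradicting incontractibility. So every step of the charge path is strictly up or strictly down.

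Next I would establish the core bound: at every up-step at position $i$ of size $k = h_{i+1} - h_i$, the count $N_i$ of nontrivial $\mu$-matches with left element $v := \sg_i$ satisfies $N_i \geq k$. The number $N_i$ equals the number of left-to-right minima other than $v+1$ in the cyclic sequence of values $>v$ starting at position $i+1$. Writing $w := \sg_{i+1}$, $l := \mbox{ind}(v)$, and $a_m$ for the smallest value with $\mbox{ind}_\sg = m$, the first lr-minimum is $w$ (in block $l+k$) and the last is $v+1$; these span the $k+1$ blocks of indices $l, l+1, \ldots, l+k$. Using the two-run structure of each block in the cyclic order (values at positions $>i$ form an increasing run, then after wraparound the values at positions $<i$ form another increasing run), together with the chain $\mbox{pos}(a_{l+k}) < \mbox{pos}(a_{l+k-1}) < \cdots < \mbox{pos}(a_{l+1}) < i < \mbox{pos}(w)$, I would show that at least $k+1$ lr-minima arise. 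When $v$ is not the maximum of its block, $v+1$ lies in block $l$ and each of the $k+1$ blocks contributes its own lr-minimum; when $v$ is the maximum of its block, $v+1 = a_{l+1}$ lies in block $l+1$, and the position chain forces $w \neq a_{l+k}$, so block $l+k$ contains both $w$ (in the first run) and $a_{l+k}$ (in the second run), and a careful analysis shows $a_{l+k}$ also becomes an lr-minimum, supplying the extra one needed.

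Finally, since there are no flat steps, $U + D = 2n$ where $U, D$ count up- and down-steps. With $A_D$ the total down-jump, $A_U \geq U$, $A_D \geq D$, and $A_U - A_D = h_{2n+1} \geq 0$, so $2A_U \geq A_U + A_D \geq 2n$, giving $A_U \geq n$ with equality if and only if every up-jump equals $+1$, every down-jump equals $-1$, and $h_{2n+1} = 0$---the defining conditions of a Dyck path. Summing the inequalities $N_i \geq h_{i+1}-h_i$ over up-steps yields $NT_\mu(C) \geq A_U \geq n$; the hypothesis $NT_\mu(C) = n$ forces both inequalities to be equalities, so the charge path of $\sg_C$ must be a Dyck path. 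The main obstacle is the bound $N_i \geq k$ in the ``hard'' subcase (when $v$ is the maximum of its block), where a careful tracking of which values $>v$ and $<a_{l+k}$ can appear in the cyclic sequence before $a_{l+k}$ is required, using both the block-run structure and the incontractibility hypothesis.
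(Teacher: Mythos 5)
Your overall strategy --- bounding $NT_\mu(C)$ from below by the total up-motion $A_U$ of the charge path and then showing $A_U\geq n$ with equality exactly for Dyck paths --- is genuinely different from the paper's argument, and two of its three steps are sound: the no-flat-step observation and the inequality $A_U\geq n$ with its equality analysis are both correct. The core per-up-step bound, however, is false, and this is a genuine gap. Consider the incontractible $7$-cycle $C=(1,3,5,4,2,7,6)$, i.e.\ $\sg=1\,3\,5\,4\,2\,7\,6$. Here $\ind_\sg(1)=\ind_\sg(2)=0$, $\ind_\sg(3)=\ind_\sg(4)=1$, $\ind_\sg(5)=\ind_\sg(6)=2$, $\ind_\sg(7)=3$, so the charge path has heights $0,1,2,1,0,3,2$ and there is an up-step of size $k=3$ at position $i=5$ with $v=\sg_5=2$. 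But the cyclic sequence of values greater than $2$ read from position $6$ is $7,6,3,5,4$, whose left-to-right minima are $7,6,3$; hence $v=2$ is the left element of only $N_5=2$ nontrivial $\mu$-matches, not $3$. (The incontractible $6$-cycle $(1,3,5,4,2,6)$ exhibits the same failure with $k=2$ versus $N_i=1$.) The step that breaks is your claimed chain $\mathrm{pos}(a_{l+k})<\cdots<\mathrm{pos}(a_{l+1})<i$: in these examples $a_{l+1}=3$ sits at position $2$ while $a_{l+2}=5$ sits at position $3$, so after the wraparound $a_{l+1}=v+1$ is encountered \emph{before} the first elements of the higher blocks; being the smallest value exceeding $v$, it immediately terminates the sequence of left-to-right minima and wipes out the contributions you expected from the intermediate blocks. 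In general one only has $\mathrm{pos}(a_{m+1})<\mathrm{pos}(b_m)$, not $\mathrm{pos}(a_{m+1})<\mathrm{pos}(a_m)$; likewise your assertion that $w\neq a_{l+k}$ in the hard subcase fails in the $7$-cycle above, where $w=7=a_3$.

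It is worth noting that in both examples the aggregate inequality $NT_\mu(C)\geq A_U$ still holds, because down-steps (which your accounting discards) contribute nontrivial matches; so the global bound may well be true, but your local decomposition does not prove it, and any repair would have to transfer credit between steps rather than argue up-step by up-step. For comparison, the paper's proof avoids this local counting entirely: it proceeds by induction on $n$, using the fact from the proof of Theorem \ref{thm:ICrec} that any $C\in\mathcal{IC}_{2n+3}$ with $NT_\mu(C)=n+1$ must be an expansion $D^{[j]}$ of some $D\in\mathcal{IC}_{2n+1}$ with $NT_\mu(D)=n$, and then analyzes case by case how expansion modifies the (inductively Dyck) charge path of $D$, showing that every non-Dyck outcome creates at least two new nontrivial matches and is therefore excluded.
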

\begin{proof}
Our proof proceeds by induction on $n$. 
For $n =1$, the only element of $\mathcal{IC}_{3}$ is $C =(1,3,2)$ and 
it is easy to see that $\mbox{chpath}(132)$ is a Dyck path. 

Now assume by induction that if $D = (1,\sg_2, \ldots, \sg_{2n+1})$ is a
$(2n+1)$-cycle in $\mathcal{IC}_{2n+1}$ such that $NT_{\mu}(D) =n$, 
then  the charge path of 
$\sg_D = 1 \sg_2 \ldots \sg_{2n+1}$ is a Dyck path. 
Now suppose that  $C = (1, \sg_2, \ldots, \sg_{2n+3})$ is a $(2n+3)$-cycle in $\mathcal{IC}_{2n+3}$ such that $NT_{\mu}(C) =n+1$. Let 
$\sg_C = \sg = 1 \sg_2 \ldots \sg_{2n+3}$. Then, we know 
by our proof of Theorem \ref{thm:ICrec} that either 
$C =D^{(j)}$ for some $D \in \mathcal{IC}_{2n+2}$ or $C =D^{[j]}$ for 
some $D \in \mathcal{IC}_{2n+1}$. We claim that  $C$ cannot be 
of the form $D^{(j)}$ for some $D \in \mathcal{IC}_{2n+2}$. 
That is, we know that 
for all $D \in \mathcal{IC}_{2n+2}$ we have $NT_\mu (D) \geq n+1$ and, hence, 
$NT_\mu (D^{(j)}) \geq 1+ NT_\mu (D) \geq n+2$. 
Thus, there must be some $D \in \mathcal{IC}_{2n+1}$ such that 
$C = D^{[j]}$ for some $j$.  But then we know that 
$P_D = \mbox{chpath}(D)$ is a Dyck path. 
Let $D =(1,\tau_2, \ldots, \tau_{2n+1})$ and 
$\sg_D = \tau = \tau_1 \ldots \tau_{2n+1}$. 
Note that it follows from our arguments in 
Theorem \ref{thm:ICrec} that 
$NT_{\mu}(D^{[j]}) - NT_{\mu}(D)$ is equal to the number of 
pairs $\langle s, 2n+3 \rangle$ which are nontrivial occurrences of 
$\mu$ in  $D^{[j]}$. It is also the case that the 
charge path of $\sg$ is easily constructed from the charge path 
of $\tau$. That is, suppose that $\tau_i =j$. Then 
$1, \ldots, j$ are in the same order in 
$\sg$ and $\tau$ so that $\ind_{\sg}(s) = \ind_{\tau}(s)$ 
for $1 \leq s \leq j$.  In going from $\tau$ to $\sg$, we 
first increased  the value of any $t > j$ by one and then 
inserted $j+1$ immediately after $j$ to get a permutation $\alpha$. 
 Thus 
$\ind_{\alpha}(j) = \ind_{\alpha}(j+1)$. Moreover,  
$j+1, \ldots, 2n+2$ are in the same relative order in $\alpha$  as 
$j, \ldots, 2n+1$ in $\tau$ so that 
for $j+1 \leq t \leq 2n+1$, $\ind_{\tau}(t) = \ind_{\alpha}(t+1)$. 
Finally, inserting $2n+3$ in $\alpha$ between $j$ and $j+1$ has 
no effect on the indices assigned to $1, \ldots , 2n+2$. 
Thus for $1 \leq s \leq j$, $\ind_{\sg}(j) = \ind_{\tau}(j)$ 
and for $j+1 \leq t \leq 2n+1$, $\ind_{\sg}(t+1) = \ind_{\tau}(t)$.
This means that one can construct the charge path of 
$\sg$ by essentially starting with the charge graph of 
$\tau$, then replacing the vertex $(i, \ind_{\tau}(\tau_i))$ by a horizontal 
edge, and finally replacing that horizontal edge 
by a pair of edges $\{(i,\ind_{\sg}(\sg_i)), (i+1, \ind_{\sg}(2n+3)\}$ 
and $\{(i+1, \ind_{\sg}(2n+3)),((i+2),\ind_{\sg}(\sg_{i+2})\}$. 
In particular, this means the charge paths from $1$ up to $i$ 
are identical for both $\sg$ and $\tau$ and the charge 
path from $i+2$ to $2n+3$ in $\sg$ is identical to the 
charge path from $i$ to $2n+1$ in $\tau$. 

We now consider several cases depending on 
which $i$ is such that $\tau_i =j$ and the value 
$\mbox{ind}_{\tau}(\tau_i)$ in the Dyck path $P_D$.

\ \\
{\bf Case 1.} $j = \tau_i$ where  $\mbox{ind}_{\tau}(\tau_{i}) 
< \mbox{ind}_{\tau}(\tau_{i+1})$.\\
In this case, we will have $\sg_i = \tau_i$, $\sg_{i+1} = 2n+3$, 
$\sg_{i+2} = 1 + \tau_i$, and $\sg_{i+3} = 1 + \tau_{i+1}$. 
Since $P_D$ is a Dyck path, there will be some 
$k$ such that 
$\mbox{ind}_{\tau}(\tau_i) =k$, and $\mbox{ind}_{\tau}(\tau_{i+1}) =k+1$ so 
that we will be in the situation pictured in Figure \ref{fig:Case1A}.
Because $\mbox{ind}_{\sg}(\sg_{i+3}) =k+1$, it must be the case that 
$\mbox{ind}_{\sg}(2n+3) >k+1$. It is easy to see that 
$\langle \sg_i, 2n+3\rangle$ is an occurrence of $\mu$ in $C$. 
We then have two subcases. \\
\ \\
{\bf Subcase 1A.} There is an $s < i$ such that 
$\mbox{ind}_{\sg}(\sg_s) =k+1$.\\
In this case, let $s_i$ be the largest $s$ such that $s < i$ and  
$\mbox{ind}_{\sg}(\sg_s) =k+1$. Because $P_D$ is a Dyck path, it must be 
the case that  for all $s_i < t < i$, $\mbox{ind}_{\sg}(\sg_{s_i}) \leq k$ because 
on the path from $(s,k+1)$ to $(i,k)$, we cannot 
have an $s < t < i$ such that $\ind_{\sg}(\sg_t) > k+1$ without 
having some $t < u < i$ such that $\ind_{\sg}(\sg_u) = k+1$ 
which would violate our choice of $s_i$. 
But this means that $\sg_t < \sg_i$ for all $s_i < t < \sg_i$ and 
hence, $\langle \sg_{s_i},2n+3\rangle$ is a nontrivial occurrence of 
$\mu$ in $C$. Thus, $NT_{\mu}(C) \geq NT_{\mu}(D)+2 = n+2$.\\
\ \\
{\bf Subcase 1B.} There is no $s < i$ such that 
$\mbox{ind}_{\sg}(\sg_s) =k+1$.\\
In this case, let $s_i$ be the largest $s$ such that $i < s$ and  
$\mbox{ind}_{\sg}(\sg_s) =k$. Because $P_D$ is a Dyck path such an 
$s$ must exist since $\mbox{ind}_{\sg}(\sg_{i+3}) =k+1$ and
a Dyck path that reaches level $k+1$ 
 must subsequently descend to level $k$. 
But this means that for all $t < i$, $\mbox{ind}_{\sg}(\sg_{t}) \leq k$ and 
hence, $\sg_t \leq \sg_i$. Moreover, for all $r > s_i$,  
$\mbox{ind}_{\sg}(\sg_{r}) < k$ and 
hence, $\sg_r \leq \sg_i$. But it then follows that since 
$\sg_{s_i} > \sg_i$, $\langle \sg_{s_i},2n+3\rangle$ is a nontrivial occurrence of 
$\mu$ in $C$. Thus, $NT_{\mu}(C) \geq  NT_{\mu}(D) + 2 = n+2$.

\fig{Case1A}{The charge path of $D^{[\tau_{2n+1}]}$ in Case 1.}

\noindent
{\bf Case 2.} $j = \tau_{2n+1}$. \\
In this case, we will have $\sg_{2n+1} = \tau_{2n+1}$, $\sg_{2n+2} = 2n+3$ and 
$\sg_{2n+3} = \tau_{2n+1}+1$. Thus, 
the charge path of $D^{[\tau_{2n+1}]}$ looks 
like one of the two situations pictured in Figure \ref{fig:Case2}. 
That is, we have two subcases. \\
\ \\
{\bf Subcase 2A.} $\mbox{ind}_{\sg}(\sg_{2n+2}) = 1$.\\
In this case, $\mbox{chpath}( \sg)$ will be a Dyck path. This case 
can only happen if $\mbox{ind}_{\tau}(\tau_i) \leq 1$ for all $
1 \leq i \leq 2n+1$. \\
\ \\
{\bf Subcase 2B.} $\mbox{ind}_{ \sg}(\sg_{2n+2}) \geq  2$.\\
In this case, $\mbox{chpath}(\sg)$ will not be a Dyck path. This case 
can only happen if there is an $s$ such that
$\mbox{ind}_{\tau}(\tau_s) = \ind_{\sg}(\tau_i+s) \geq 2$. But then since 
$\mbox{ind}_{\sg}(\sg_{2n}) = 1$, it must be the case that 
$\sg_{2n} < \tau_s +1 < 2n+3$. 
Hence both $\langle \sg_{2n+1}, 2n+3\rangle$ and 
$\langle \sg_{2n}, 2n+3\rangle$ are nontrivial occurrences of 
$\mu$ in $C$ so that $NT_{\mu}(C) \geq  NT_{\mu}(D)+2 = n+2$. 

\fig{Case2}{The charge path of $D^{[\tau_{2n+1}]}$ in Case 2.}

\ \\
{\bf Case 3.} $j = \tau_i$ where 
$\mbox{ind}_{\tau}(\tau_{i-1}) > \mbox{ind}_{\tau}(\tau_{i}) 
> \mbox{ind}_{\tau}(\tau_{i+1})$.\\
In this case, we will have 
$\sg_{i-1} = 1+\tau_{i-1}$, $\sg_i = \tau_i$, $\sg_{i+1} = 2n+3$, 
$\sg_{i+2} = 1 + \tau_i$, and $\sg_{i+3} = \tau_{i+1}$. There 
are now two cases.  \\
\ \\
{\bf Case 3A.} $\tau_{i-1} < 2n+1$. \\
In this case, we will have the situation pictured in 
Figure \ref{fig:Case4A}.  That is, since $\sg_{i-1} = 1+ \tau_{i-1} < 2n+2$, 
it must be the case the either $2n+2$ is to the left of $\sg_{i-1}$ in 
which case $\ind_{\sg}(2n+2) > k+1$ or $2n+2$ is the right of $\sg_{i+3}$ 
in which case $\ind_{\sg}(2n+2) \geq k+1$ and $2n+3$ is to the left 
of $2n+2$ in $\sg$. In either case, it 
must be that $\mbox{ind}_{\sg}(2n+3) > k+1$.  It is then easy to 
see that both $\langle \sg_{i-1},2n+3 \rangle$ and  
$\langle \sg_{i},2n+3 \rangle$ are nontrivial occurrences of $\mu$ in 
$C$ so that $NT_{\mu}(C) \geq  NT_{\mu}(D)+2 = n+2$.

\fig{Case4A}{The charge path of $D^{[\tau_i]}$ in Case 3A.}
\ \\
{\bf Case 3B.} $\tau_{i-1} =2n+1$. \\
In this case, we will have the situation pictured in 
Figure \ref{fig:Case4B}.  In this situation, 
the charge path of $D^{[\tau_i]}$ will be a Dyck path.

\fig{Case4B}{The charge path of $D^{[\tau_i]}$ in Case 3B.}
\ \\
{\bf Case 4.} $j=\tau_i$ where 
$\mbox{ind}_{\tau}(\tau_{i-1}) < \mbox{ind}_{\tau}(\tau_{i}) 
 > \mbox{ind}_{\tau}(\tau_{i+1})$.\\
In this case, we will have 
$\sg_{i-1} = \tau_{i-1}$, $\sg_i = \tau_i$, $\sg_{i+1} = 2n+3$, 
$\sg_{i+2} = 1 + \tau_i$, and $\sg_{i+3} = \tau_{i+1}$. There 
are now two subcases. \\
\ \\
{\bf Subcase 4A.} $\mbox{ind}_{\sg}(2n+3) > k+1$.\\
In this case, we have the situation pictured in Figure 
\ref{fig:Case6A}. It will always be the case 
that $\langle \sg_i, 2n+3 \rangle$ is a nontrivial occurrence of $\mu$  
in $C$. We then have two more subcases. \\ 
\ \\
{\bf Subcase 4A1.} There is an $s < i$ such that 
$\mbox{ind}_{ \sg}(\sg_s) =k+1$.\\
In this case, let $s_i$ be the largest $s$ such that $s < i$ and  
$\mbox{ind}_{\sg}(\sg_s) =k+1$. Because $P_D$ is a Dyck path, we 
can argue as in Case 1A  
that it must be the case that 
for all $s_i < t < i$, $\mbox{ind}_{\sg}(\sg_{s_t}) \leq k$. 
But this means that $\sg_t < \sg_i$ for all $s_i < t < \sg_i$ and 
hence, $\langle \sg_{s_i},2n+3\rangle$ is also a nontrivial occurrence 
of $\mu$  
in $C$. Thus, $NT_{\mu}(C) \geq NT_{\mu}(D) + 2 = n+2$.\\
\ \\
{\bf Subcase 4A2.} There is no $s < i$ such that 
$\mbox{ind}_{ \sg}(\sg_s) =k+1$.\\
In this case, let $s_i$ be the largest $s$ such that $i < s$ and  
$\mbox{ind}_{ \sg}(\sg_s) =k$. Note that $s_i$ exists since 
$\mbox{ind}_{\sg}(\sg_{i+2}) =k$. 
But this means that for all $t < i$, $\mbox{ind}_{\sg}(\sg_{t}) \leq k$ 
and,  
hence, $\sg_t < \sg_i$. Moreover, for all $r > s_i$,  
$\mbox{ind}_{\sg}(\sg_{r}) < k-1$ and 
hence, $\sg_r < \sg_i$. But it then follows that since 
$\sg_{s_i} > \sg_i$, $\langle \sg_{s_i},2n+3\rangle$ is also a nontrivial occurrence of $\mu$   
in $C$. Thus, $NT_{\mu}(C) \geq  NT_{\mu}(D) + 2 = n+2$.

\fig{Case6A}{The charge path of $D^{[\tau_i]}$ in Case 4A.}
\ \\
{\bf Subcase 4B.} $\mbox{ind}_{\sg}(2n+3) = k+1$.\\
In this case, we have the situation pictured in Figure 
\ref{fig:Case6B} and, hence, $\mbox{chpath}(\sg)$ will be a Dyck path. 

\fig{Case6B}{The charge path of $D^{[\tau_i]}$ in Case 4B.}

Thus, we have shown that if $D \in \mathcal{IC}_{2n+1}$ is such that 
$NT_{\mu}(D) =n$ and $D^{[i]} =C$, then either 
$\mbox{chpath}(\sg)$ is a Dyck path or $NT_{\mu}(D^{[i]}) \geq n+2$. 
Hence, if $NT_{\mu}(C) =n+1$, then $\mbox{chpath}(\sg)$ is a Dyck path. 
\end{proof}

Theorems \ref{thm:chpath} and \ref{thm:minchpath} yield 
the following corollary.

\begin{corollary}\label{cor:Cat} 
$NTI_{2n+1,\mu}(q)|_{q^n} = C_n$ where $C_n = \frac{1}{n+1}\binom{2n}{n}$ 
is the $n$th Catalan number. 
\end{corollary}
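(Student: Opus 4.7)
The plan is to combine the two preceding theorems to convert the problem into a count of Dyck paths, and then to establish a bijection between Dyck paths of length $2n$ and the relevant permutations.

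First, Theorems \ref{thm:chpath} and \ref{thm:minchpath} together give a bijection between incontractible $(2n+1)$-cycles $C$ with $NT_{\mu}(C)=n$ and permutations $\sg = 1\,\sg_2\cdots\sg_{2n+1}\in S_{2n+1}$ whose charge path is a Dyck path of length $2n$; note that $\sg_1=1$ is forced, since a Dyck path starts at height $0$ and only $1$ can have $\ind_\sg(\sg_1)=0$. Consequently, $NTI_{2n+1,\mu}(q)|_{q^n}$ equals the number of permutations $\sg\in S_{2n+1}$ whose charge path is a Dyck path.

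Second, I will exhibit an explicit bijection between the Dyck paths of length $2n$ and such permutations. Given a Dyck path $P$ with vertices $(i,h_i)$, I construct $\sg_P$ as follows. Since $\ind_\sg(k)$ is a weakly increasing function of $k$ with steps in $\{0,1\}$, if $a_k$ denotes the number of indices $i$ with $h_i=k$, then values $1,\dots,a_0$ must receive index $0$, values $a_0+1,\dots,a_0+a_1$ must receive index $1$, and so on. Moreover, if $v<v'$ share the same index, then $v,v+1,\dots,v'$ are forced to appear in left-to-right order in $\sg$; so the values with common index $k$ must be placed at the positions of height $k$ in left-to-right order. This determines $\sg_P$ uniquely, and also shows that any $\sg$ with Dyck charge path is built in exactly this way, giving surjectivity.

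The main obstacle is the well-definedness check: verifying that $\mbox{chpath}(\sg_P)=P$, i.e.\ that the index of the value placed at position $i$ really equals $h_i$. The nontrivial point is that when we pass from the largest value of index $k$ (placed at the rightmost position of $P$ of height $k$) to the smallest value of index $k+1$ (placed at the leftmost position of $P$ of height $k+1$), the index must actually jump by $1$; equivalently, the leftmost position of $P$ at height $k+1$ must lie strictly to the left of the rightmost position at height $k$. This is a direct property of a Dyck path: after first reaching height $k+1$, the path must later descend through height $k$ before ending at $0$. Once this is settled, the map $P\mapsto \sg_P$ is a bijection, so the count is the number of Dyck paths of length $2n$, namely $C_n$.
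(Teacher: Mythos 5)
Your proposal is correct and follows essentially the same route as the paper: the paper's proof of this corollary consists precisely of invoking Theorems \ref{thm:chpath} and \ref{thm:minchpath} to identify the incontractible $(2n+1)$-cycles with $n$ nontrivial $\mu$-matches with the permutations of $S_{2n+1}$ whose charge path is a Dyck path. The only difference is that you additionally spell out the (correct) bijection between such permutations and Dyck paths of length $2n$ --- including the key well-definedness check that the leftmost vertex at height $k+1$ precedes the rightmost vertex at height $k$ --- a step the paper leaves implicit.
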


\section{Plots and Integer Partitions}\label{intpart}

 In this section we will show that $NTI_{n,\mu}(q)|_{q^{\binom{n-1}{2} -k}}$
  and $NT_{n,\mu}(q)|_{q^{\binom{n-1}{2} -k}}$ are equal to the number of 
partitions of $k$ for sufficiently large $n$ by plotting the \emph{non-$\mu$-matches} on a grid
and mapping them to integer partitions.

Given an $n$-cycle $C \in \mathcal{C}_n$, we say that a pair 
$\langle i,j\rangle $ with $i<j$ is a non-$\mu$-match of 
$C$ if $\langle i,j\rangle $ is not an occurrence of $\mu$ in $C$.
 In other words, $\langle i,j\rangle $ is a non-$\mu$-match if there exists an integer $x$ such that $i < x < j$ and $x$ is cyclically between $i$ and $j$ in $C$.
 Furthermore, let $\mathcal{NM}_\mu(C)$ be the set of non-$\mu$-matches in  $C$
  and let $NM_\mu(C)=|\mathcal{NM}_\mu(C)|$. Let 
\begin{equation}
NM_{n,\mu}(q)=\sum_{C \in \mathcal{C}_n}q^{NM_\mu(C)}.
\end{equation} 
Note that for any $C \in \mathcal{C}_n$, $NM_\mu(C)+NT_\mu(C)={n-1 \choose 2}$ 
so that \\
$NM_{n,\mu}(q)|_{q^k}=NT_{n,\mu}(q)|_{q^{{n-1\choose2}-k}}$. Table \ref{tab:NM} shows the polynomials
$NM_{n,\mu}(q)$ for $1\le n \le 10$. Our data suggests the following theorem.

 \begin{table}[h]
\caption{Polynomials $NM_{n,\mu}(q)$}\label{tab:NM}    
\begin{tabular}{c||*{2}{l}}
$n=1$ & $1$ \\
 \hline
$2$ & $1$ \\
 \hline
$3$ & $1 + q$ \\
 \hline
$4$ & $1 + q + 3\,q^2 + q^3$ \\
 \hline
$5$ & $1 + q + 2\,q^2 + 7\,q^3 + 6\,q^4 + 6\,q^5 + q^6$ \\
 \hline
$6$ & $1 + q + 2\,q^2 + 3\,q^3 + 13\,q^4 + 15\,q^5 + 23\,q^6 + 31\,q^7 + 20\,q^8 + 10\,q^9 + q^{10}$ \\
 \hline
$7$ & $1 + q + 2\,q^2 + 3\,q^3 + 5\,q^4 + 19\,q^5 + 25\,q^6 + 46\,q^7 + 66\,q^8 + 119\,q^9 + 126\,q^{10} +$\\
&$ 135\,q^{11} + 106\,q^{12} + 50\,q^{13} + 15\,q^{14} + q^{15}$ \\
 \hline
$8$ & $1 + q + 2\,q^2 + 3\,q^3 + 5\,q^4 + 7\,q^5 + 27\,q^6 + 33\,q^7 + 65\,q^8 + 101\,q^9 + 174\,q^{10} + $\\
&$299\,q^{11} + 418\,q^{12} + 603\,q^{13} + 726\,q^{14} + 850\,q^{15} + 736\,q^{16} + 561\,q^{17} + $\\
&$301\,q^{18} + 105\,q^{19} + 21\,q^{20} + q^{21}$ \\
 \hline
$9$ & $1 + q + 2\,q^2 + 3\,q^3 + 5\,q^4 + 7\,q^5 + 11\,q^6 + 35\,q^7 + 44\,q^8 + 80\,q^9 + 126\,q^{10} + $\\
&$217\,q^{11} + 338\,q^{12} + 646\,q^{13} + 888\,q^{14} + 1461\,q^{15} + 2116\,q^{16} + 3093\,q^{17} + $\\
&$4055\,q^{18} + 5007\,q^{19} + 5675\,q^{20} + 5541\,q^{21} + 4820\,q^{22} + 3311\,q^{23} + 1870\,q^{24} + $\\
&$742\,q^{25} + 196\,q^{26} + 28\,q^{27} + q^{28}$ \\
 \hline
$10$ & $1 + q + 2\,q^2 + 3\,q^3 + 5\,q^4 + 7\,q^5 + 11\,q^6 + 15\,q^7 + 46\,q^8 + 56\,q^9 + 100\,q^{10} + $\\
&$148\,q^{11} + 251\,q^{12} + 374\,q^{13} + 640\,q^{14} + 1098\,q^{15} + 1640\,q^{16} + 2568\,q^{17} + $\\
&$3971\,q^{18} + 6179\,q^{19} + 9137\,q^{20} + 13710\,q^{21} + 18551\,q^{22} + 25689\,q^{23} + $\\
&$32781\,q^{24} + 40008\,q^{25} + 44119\,q^{26} + 45433\,q^{27} + 41488\,q^{28} + 32864\,q^{29} +$\\
&$ 22392\,q^{30} + 12253\,q^{31} + 5328\,q^{32} + 1638\,q^{33} + 336\,q^{34} + 36\,q^{35} + q^{36}$ \\
 \hline
\end{tabular}
\end{table}

\begin{theorem}{\label{nm}}
For $k<n-2$, $NM_{n,\mu}(q)|_{q^k}=a(k)$  where $a(k)$ is the number of integer partitions of $k$.
\end{theorem}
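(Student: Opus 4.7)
The plan is to construct, for each $k<n-2$, a bijection between the set $\{C\in\mathcal{C}_n : NM_\mu(C)=k\}$ and the set of integer partitions of $k$. The bijection sends $C$ to the partition whose Ferrers diagram (with its top-left corner placed at the cell $\langle 1,n\rangle$) coincides with the plot of $\mathcal{NM}_\mu(C)$. The proof proceeds in three steps.

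\textbf{Step 1 (few non-matches forces incontractibility).} First I would show that any bond of $C$ produces $n-2$ distinct non-matches. If $a+1$ immediately follows $a$ in $C$, then $a$ is cyclically between $i$ and $a+1$ for every $i\in\{1,\ldots,a-1\}$, making $\langle i,a+1\rangle$ a non-match; similarly $a+1$ is cyclically between $a$ and $j$ for every $j\in\{a+2,\ldots,n\}$, making $\langle a,j\rangle$ a non-match. These two families are disjoint, totalling $(a-1)+(n-a-1)=n-2$ non-matches. Hence $NM_\mu(C)<n-2$ forces $C$ to be incontractible.

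\textbf{Step 2 (the non-match plot is a Ferrers diagram).} Assuming $NM_\mu(C)<n-2$, so that $C$ is incontractible by Step 1, I would show that for every non-match $\langle i,j\rangle$: (a) $\langle i',j\rangle$ is a non-match for $1\leq i'<i$, and (b) $\langle i,j'\rangle$ is a non-match for $j<j'\leq n$. Both are established by contradiction. For (b), suppose $\langle i,j\rangle$ is a non-match with some witness $x\in(i,j)$ cyclically between $i$ and $j$, but $\langle i,j'\rangle$ is a $\mu$-match for some $j'>j$. Then the clockwise arc from $i$ to $j'$ avoids every element of $(i,j')$, forcing a rigid cyclic layout. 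Tracing the positions of $j'-1,j'-2,\ldots$ around the cycle together with the position of $n$, and using the incontractibility of $C$ (which prevents bonds from short-circuiting the argument), one exhibits $n-2$ distinct non-matches, contradicting the hypothesis. The argument for (a) is analogous and symmetric in spirit. These closure properties are exactly what is needed for the plot to be a Ferrers diagram anchored at $\langle 1,n\rangle$.

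\textbf{Step 3 (bijection).} Step 2 gives a well-defined map $C\mapsto \lambda(C)$ from $\{C:NM_\mu(C)=k\}$ to partitions of $k$. For surjectivity, given any partition $\lambda=(\lambda_1\geq\cdots\geq\lambda_\ell)$ with $|\lambda|=k<n-2$, one exhibits an explicit $n$-cycle $C_\lambda$ whose non-match plot is precisely the Ferrers diagram of $\lambda$, modelled on the paper's illustrative example $C=(1,8,7,6,5,11,4,3,10,2,9)$ corresponding to $\lambda=(4,2,1)$ and $n=11$. For injectivity, the non-match set of $C$ determines $C$ uniquely: the match/non-match status of the pairs $\langle 1,j\rangle$ forces $c_2$ to be the unique feasible value, and iterating the argument row by row determines $c_3,\ldots,c_n$. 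Combining surjectivity and injectivity yields $NM_{n,\mu}(q)|_{q^k}=a(k)$.

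\textbf{Main obstacle.} Step 2 is the hardest part. Cyclic betweenness is not monotonic in its endpoints: the witness $x$ certifying $\langle i,j\rangle$ as a non-match need not certify $\langle i,j'\rangle$, and no purely local argument suffices. The proof must leverage both the global constraint $NM_\mu(C)<n-2$ and the incontractibility established in Step 1 to rule out pathological cyclic configurations and to manufacture the requisite $n-2$ non-matches whenever closure fails.
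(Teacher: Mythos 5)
Your overall architecture is the same as the paper's: plot the non-matches, show the plot is a Ferrers diagram when $NM_\mu(C)<n-2$, and then count by a bijection with partitions of $k$. However, the two steps that actually carry the weight of the theorem are asserted rather than proved, and for the harder one your proposed route is not the one that works. In Step 2 you reduce everything to ``tracing the positions of $j'-1,j'-2,\ldots$ together with the position of $n$, using incontractibility, one exhibits $n-2$ distinct non-matches'' --- but this is precisely the content of the theorem, and you concede in your final paragraph that you do not have the argument. The paper's proof (its Lemma on the three closure properties) does not route through incontractibility at all. Instead, for the case where $\langle x,y\rangle$ is a non-match with witness $z$ but $\langle x,y+1\rangle$ is a match, it observes that the cycle is forced into the shape $(\ldots,x,\ldots,y+1,\ldots,z,\ldots,y,\ldots)$, exhibits the two explicit non-matches $\langle x,y\rangle$ and $\langle z,y+1\rangle$, and then shows that \emph{every one} of the remaining $n-4$ elements lies in its own non-match with one of $x$, $y$, $y+1$, $z$ (a short case analysis on which arc the element sits in and whether it is above or below the relevant threshold). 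That arc-decomposition count is the missing idea; incontractibility (your Step~1, which is correct but is only needed later for the $NTI$ corollary) does not substitute for it.

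The second gap is surjectivity in Step 3. Saying ``one exhibits an explicit $n$-cycle $C_\lambda$ modelled on the paper's example'' is not a construction: you must both define $C_\lambda$ for an arbitrary $\lambda\vdash k$ with $k<n-2$ and verify that its non-match set is \emph{exactly} $FD_n(\lambda)$ --- no extra non-matches may appear. The paper does this by induction on $n$: take the $(n-1)$-cycle realizing $(\lambda_2,\ldots,\lambda_\ell)$, insert $n$ immediately after $\lambda_1+1$, and check that this creates precisely the non-matches $\langle 1,n\rangle,\ldots,\langle\lambda_1,n\rangle$ in the new top row while leaving all other pairs' status unchanged. Your injectivity sketch (row-by-row determination of $c_2,c_3,\ldots$) is plausible and close in spirit to the paper's uniqueness theorem, which instead inducts by deleting the largest element; that part I would accept with details. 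But without a proof of the closure properties and without the explicit realization of every partition, the proposal does not yet establish the theorem.
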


To prove Theorem \ref{nm}, we need to define what we call 
the plot of non-$\mu$-matches in $C \in \mathcal{C}_n$. We consider 
an $n \times n$ grid where the rows are labeled with $1,2, \ldots , n$, 
reading from bottom to top, and the columns are labeled with $1,2, \ldots, n$, 
reading from left to right. The cell $(i,j)$ is the cell which lies 
in the $i$th row and $j$th column. Then, given a 
set  $S$ of ordered pairs $\langle i,j\rangle $ with $1 \leq i  \leq n$, 
we let 
 $plot_n(S)$ denote the diagram that arises by shading 
a cell $(j,i)$ on the $n \times n$ grid if and only if $\langle i,j\rangle  \in S$. Given an $n$-cycle $C \in \mathcal{C}_n$, we 
let $NMplot(C) = plot_n(\mathcal{NM}_\mu(C))$. 
For example, if $C=(1,4,5,3,8,7,2,6)$, then 
$$\mathcal{NM}_\mu(C)=\{\langle 1,5\rangle ,\langle 1,6\rangle ,\langle 1,7\rangle ,\langle 1,8\rangle ,\langle 2,5\rangle ,\langle 2,7\rangle ,\langle 2,8\rangle ,\langle 3,5\rangle ,\langle 4,6\rangle ,\langle 4,7\rangle ,\langle 4,8\rangle \}$$
and $NMplot(C)$ is pictured in Figure \ref{fig:nmplot}. 

\begin{figure}[htbp]
  \begin{center}
    \includegraphics[width=0.25\textwidth]{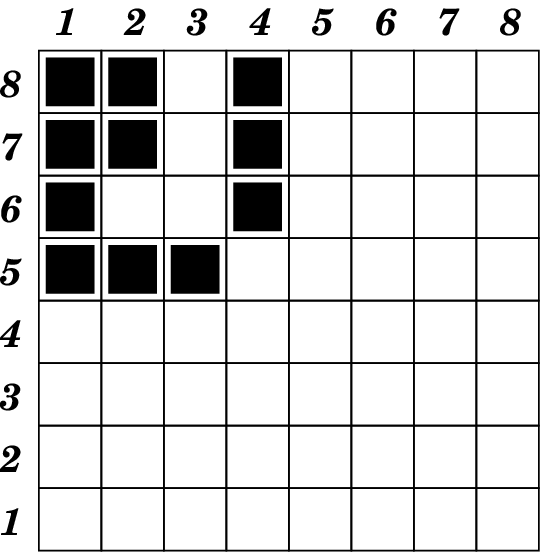}
    \caption{$NMplot((1,4,5,3,8,7,2,6))$.}
    \label{fig:nmplot}
  \end{center}
\end{figure}

First, we observe that if $C \in \mathcal{C}_n$, then 
$\mathcal{NM}_\mu(C)$ completely determines $C$. That is, we 
have the following theorem. 

\begin{theorem}
If $S$ is a set of ordered pairs such that $\mathcal{NM}_\mu(C)=S$ for some $n$-cycle $C \in \mathcal{C}_n$, then
$C$ is the only $n$-cycle such that $\mathcal{NM}_\mu(C)=S$.
\end{theorem}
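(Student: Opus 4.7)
My plan is to prove the theorem by induction on $n$, showing that $\mathcal{NM}_\mu(C)$ contains enough information to reconstruct $C$ uniquely. The base case $n \leq 2$ is trivial since $\mathcal{C}_n$ contains a unique element. For the inductive step, I would first recover the predecessor of $n$ in $C$ from $S$, then reduce to an $(n-1)$-cycle and invoke the inductive hypothesis.

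For the first part, write the clockwise traversal starting from $n$ as $n, c_1, c_2, \ldots, c_{n-1}$, so that $p := c_{n-1}$ is the predecessor of $n$. I claim $p = \min\{i < n : \langle i, n\rangle \notin S\}$, i.e.\ $p$ is the smallest $i$ such that $\langle i, n\rangle$ is a $\mu$-match in $C$. Indeed, for $i = c_j$ the pair $\langle i, n\rangle$ is a $\mu$-match iff no $k$ with $i < k < n$ is encountered between $i$ and $n$ clockwise, which is equivalent to $c_j > c_{j+1}, c_{j+2}, \ldots, c_{n-1}$. So the set of such $i$ is exactly the set of right-to-left maxima of $c_1, \ldots, c_{n-1}$. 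The rightmost entry $c_{n-1}$ is automatically a right-to-left maximum, and every other right-to-left maximum strictly exceeds $c_{n-1}$, so the minimum of this set is $c_{n-1} = p$. Thus $p$ is determined by $S$.

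Next I would let $C' \in \mathcal{C}_{n-1}$ be the $(n-1)$-cycle obtained by deleting $n$ from $C$ (so that the new successor of $p$ becomes what was the successor of $n$ in $C$). The key observation is that for any pair $\langle i, j\rangle$ with $i < j \leq n-1$, the $\mu$-match condition in $C$ is equivalent to the $\mu$-match condition in $C'$: the removal of $n$ cannot affect whether some $k$ with $i < k < j$ lies cyclically between $i$ and $j$, because $n > j$ forces $n \notin (i,j)$. Hence $\mathcal{NM}_\mu(C') = \{\langle i, j\rangle \in S : j < n\}$, which is determined by $S$. By the inductive hypothesis $C'$ is uniquely determined by $\mathcal{NM}_\mu(C')$, and then $C$ is reconstructed by inserting $n$ into $C'$ immediately after $p$. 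Hence $C$ is unique.

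The main (and mild) obstacle is simply justifying the identification of the $\mu$-matches $\langle i, n\rangle$ with right-to-left maxima of the clockwise sequence from $n$; this is a direct consequence of the definitions. Everything else — the stability of the $\mu$-match condition under removal of the maximum and the reassembly step — is routine.
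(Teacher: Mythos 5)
Your proof is correct and follows essentially the same strategy as the paper: induct on $n$, delete the maximum element (which leaves the non-match set on $\{1,\dots,n-1\}$ unchanged), apply the inductive hypothesis, and recover the insertion position of $n$ from the pairs $\langle i,n\rangle$ — your identification of $p$ as the least $i$ with $\langle i,n\rangle\notin S$ is exactly the paper's observation that the $n$ possible insertion positions yield pairwise distinct non-match sets. The right-to-left-maxima justification you give is a nice, slightly more explicit way of verifying that step.
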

\begin{proof}
Our proof proceeds by induction on $n$. Clearly, the theorem holds 
for $n =1$ and $n =2$. Now, assume that the theorem holds for 
$n$.  Let $C' \in \mathcal{C}_{n+1}$ and  
$S = \mathcal{NM}_{\mu}(C')$. Let $C= (1,c_2, \ldots, c_n)$ 
be the $n$-cycle that is obtained 
from $C$ by removing $n+1$.  Then it is easy to see that 
$\mathcal{NM}_{\mu}(C) = S- \{\langle i,n+1\rangle :\langle i,n+1\rangle  \in S\}$. Hence, $C$ is 
the unique $n$-cycle in $\mathcal{C}_n$ such that $\mathcal{NM}_{\mu}(C) = S- \{\langle i,n+1\rangle :\langle i,n+1\rangle  \in S\}$.  Let $C^{(i)}$ denote the cycle that 
results by inserting $n+1$ immediately after $i$ in $C$.  Since $C$ is 
unique, it follows that $C'$ must equal $C^{(i)}$ for some $1 \leq i \leq n$.
However, it is easy to see that if $1 \leq i <n$, then 
$\langle j,n+1\rangle  \in \mathcal{NM}_{\mu}(C^{(i)})$ for 
$1 \leq j <i$ and $\langle i,n+1\rangle  \not \in  \mathcal{NM}_{\mu}(C^{(i)})$. 
If $i =n$, then $\langle j,n+1\rangle  \in \mathcal{NM}_{\mu}(C^{(i)})$ for 
$1 \leq j <n$.  Thus, it follows that $\mathcal{NM}_\mu(C^{(1)}), \ldots, \mathcal{NM}_\mu(C^{(n)})$ are pairwise distinct. Hence, there is exactly 
one cycle $C'$ such that $\mathcal{NM}_\mu(C') = S$. 
\end{proof}

An integer sequence  $\lambda=(\lambda_1,\lambda_2,\dots,\lambda_\ell)$ 
is a {\em partition of $n$} if $\lambda_1 \geq \lambda_2 \geq \cdots \geq \lambda_{\ell} > 0$ and $\sum_{i=1}^{\ell} \lambda_i =n$.  
If 
$\lambda=(\lambda_1,\lambda_2,\dots,\lambda_\ell)$ is a partition of 
$n$, we will write $\lambda \vdash n$, and let $|\lambda| =n$ denote the size 
of $\lambda$ and 
$\ell(\lambda) = \ell$ denote the length of $\lambda$.  The 
\emph{Ferrers diagram} of a partition $\lambda$  on the $m \times m$ grid, denoted $FD_m(\lambda)$, is the diagram that results by shading 
the squares $(1,m),(2,m),\dots,(\lambda_1,m)$ in the top row, 
shading the squares $(1,m-1),(2,m-1),\dots,(\lambda_2,m-1)$ in the second 
row from the top, and, in general, shading the 
squares $(1,m-i+1),(2,m-i+1),\dots,(\lambda_i,m-i+1)$ in the 
$i$th row from the top. For example, the Ferrers diagram 
of $\lambda=(5,4,3,1,1)$ on a $8 \times 8$ grid is pictured in Figure \ref{fig:ferrers}.

\begin{figure}[htbp]
  \begin{center}
    \includegraphics[width=0.25\textwidth]{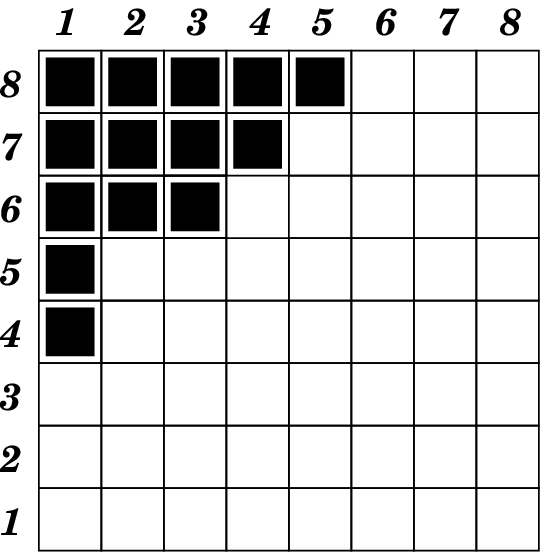}
    \caption{ $FD_8(5,4,3,1,1)$}
    \label{fig:ferrers}
  \end{center}
\end{figure}

It is easy to see that the following three properties characterize 
the Ferrers diagrams of partitions $\lambda$ in an $n\times n$ grid.
\begin{enumerate}
\item
If $\lambda$ is not empty, then the square $(n,1)$ is shaded in 
$FD_m(\lambda)$.
\item
If $x \neq 1$ and the square $(y,x)$ is shaded in $FD_m(\lambda)$, 
then the square $(y,x-1)$ is shaded in $FD_m(\lambda)$.
\item
If $y \neq n$ and the square $(y,x)$ is shaded in $FD_m(\lambda)$, 
 then the square $(y+1,x)$ is shaded in $FD_m(\lambda)$.
\end{enumerate}
This means that if $S$ is a set of pairs $(i,j)$ such 
that $plot_n(S)$ is a Ferrers diagram, then it must be the case 
that 
\begin{enumerate}
\item
If $S$ is not empty, then the square $(1,n) \in S$.
\item
If $x \neq 1$ and the square $(x,y) \in S$,  
then $(x-1,y) \in S$.
\item
If $y \neq n$ and the square $(x,y) \in S$, 
 then $(x,y+1) \in S$.
\end{enumerate}

Our next lemma and corollary will show that if 
$C \in \mathcal{C}_n$ and $NM_\mu(C) < n-2$, then 
$\mathcal{NM}_\mu (C)$ has the same three properties. 

\begin{lemma}\label{lem:n-2}
Assume that $C \in \mathcal{C}_n$. 
\begin{enumerate}
\item
If $\langle 1,n\rangle \notin\mathcal{NM}_\mu(C)$ and ${NM}_\mu(C)\neq 0$, then $NM_\mu(C)\ge n-2$.
\item
If $x\ne 1$, $\langle x,y\rangle \in\mathcal{NM}_\mu(C)$, and $\langle x-1,y\rangle \notin\mathcal{NM}_\mu(C)$, then $NM_\mu(C)\ge n-2$.
\item
If $y\ne n$, $\langle x,y\rangle \in\mathcal{NM}_\mu(C)$, and $\langle x,y+1\rangle \notin\mathcal{NM}_\mu(C)$, then $NM_\mu(C)\ge n-2$.
\end{enumerate}
\end{lemma}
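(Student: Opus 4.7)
The plan is to prove each of the three parts separately by exhibiting at least $n-2$ specific non-matches under each hypothesis.

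For Part (1), since $\langle 1,n\rangle$ is a $\mu$-match, $n$ must immediately follow $1$ in $C$, so I can write $C=(1,n,d_2,\ldots,d_{n-1})$ where $(d_2,\ldots,d_{n-1})$ is a permutation of $\{2,\ldots,n-1\}$. A direct check shows the cycle $(1,n,n-1,n-2,\ldots,2)$ has no non-$\mu$-matches, so the hypothesis $NM_\mu(C)\neq 0$ forces the sequence $(d_2,\ldots,d_{n-1})$ to have at least one ascent. Let $s$ be the smallest index with $d_s<d_{s+1}$, and set $a=d_s$ and $b=d_{s+1}$. Because $d_2 > d_3 > \cdots > d_s = a$, the values $\{2,\ldots,a-1\}$ all occupy positions strictly greater than $s$ in the cycle. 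The primary non-matches I would exhibit are: $\langle k,b\rangle$ for each $k\in\{1,\ldots,a-1\}$ (a non-match because $a$ lies strictly between $k$ and $b$ in value and is cyclically between them in $C$, as the clockwise arc from such a $k$ to $b$ wraps around through $1,n,d_2,\ldots,d_s=a$), together with $\langle a,k\rangle$ for each $k\in\{b+1,\ldots,n\}$ (a non-match because $b$ immediately follows $a$ in $C$ and $b\in(a,k)$). These two families are disjoint and yield $(a-1)+(n-b)$ non-matches, which equals $n-2$ exactly when $b=a+1$.

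For Parts (2) and (3), an analogous strategy would apply. In Part (2), the hypothesis that $\langle x-1,y\rangle$ is a $\mu$-match forces $x$ to lie on the complementary arc $A$ running clockwise from $y$ back to $x-1$ (since $x\in(x-1,y)$), and then $\langle x,y\rangle$ being a non-match produces some $\ell\in(x,y)$ which must also lie on $A$, strictly between $x$ and $x-1$. From this configuration I would construct non-matches $\langle k,y\rangle$ for $k\in\{1,\ldots,x-1\}$ (using $x$ as the in-between witness) and $\langle x,k\rangle$ for $k\in\{y+1,\ldots,n\}$ (using $\ell$ as the in-between witness), plus additional non-matches built around $\ell$ and the endpoints $1,n$. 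Part (3) is handled symmetrically by reversing the roles of the left and right coordinates, so that the witness structure is built from an element trapped between $x$ and $y+1$ on the opposite arc.

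The main obstacle is that the primary families of non-matches do not automatically number at least $n-2$ in every configuration: in Part (1), when $b-a>1$ the direct count $(a-1)+(n-b)$ falls short, and one must extract additional non-matches from the structure of the decreasing prefix $d_2,\ldots,d_{s-1}$. For instance, any $k\in\{a+1,\ldots,b-1\}$ that occurs inside this decreasing prefix gives a non-match $\langle k,n\rangle$, since the clockwise arc from $k$ passes through $a$ and then $b$ before reaching $n$, placing $b$ cyclically between $k$ and $n$. Balancing these secondary contributions against all possible values of the gap $b-a$ and all possible positions of the intermediate values is the delicate step, and Parts (2) and (3) face the analogous difficulty whenever $x$ and $y$ are far from the extreme values $1$ and $n$, requiring a careful case analysis driven by the structural picture of the two arcs split by $x-1$ and $y$ (respectively $x$ and $y+1$).
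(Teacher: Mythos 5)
Your overall strategy---exhibit explicit families of non-$\mu$-matches forced by the hypotheses and count them---is the same as the paper's, but the count is never closed, and you say so yourself. In Part (1) your two primary families give $(a-1)+(n-b)$ non-matches, which is short by $b-a-1$, so you need one further non-match for each value $k\in\{a+1,\dots,b-1\}$; the only secondary family you exhibit, namely $\langle k,n\rangle$ for $k$ lying in the decreasing prefix $d_2,\dots,d_{s-1}$, covers only those $k$ that happen to sit before the ascent. Nothing is said about values $k\in\{a+1,\dots,b-1\}$ occurring after $b$ in the cycle, and this case genuinely arises: for $C=(1,6,2,5,4,3)$ one has $s=2$, $a=2$, $b=5$, the decreasing prefix is empty, and your listed families produce only the two pairs $\langle 1,5\rangle$ and $\langle 2,6\rangle$, whereas $n-2=4$. (The missing pairs are $\langle 1,3\rangle$ and $\langle 1,4\rangle$; in general, for $k\in\{a+1,\dots,b-1\}$ located after $b$, the pair $\langle 1,k\rangle$ is a non-match with witness $a$ and is distinct from your other families, so the gap is fixable---but it is not filled in the proposal.)

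Parts (2) and (3) are in worse shape: the one concrete family you assert, $\langle k,y\rangle$ for all $k\in\{1,\dots,x-1\}$ with witness $x$, is false as stated, because $x$ is not cyclically between $k$ and $y$ when $k$ lies on the arc from $x-1$ to $y$; such $k$ must be paired differently (the paper pairs them with $z$, its name for your $\ell$, using $x$ as the witness for $\langle k,z\rangle$). The paper's proof avoids all of these difficulties by sharper bookkeeping: it cuts the cycle into the arcs determined by $x$, $z$, $x-1$, $y$ (respectively, in Part (1), by the maximal initial run of \emph{consecutive} values $n,n-1,\dots,n-a$ together with $b$ and $b+1$), records $2$ (resp.\ $a+2$) unconditional non-matches, and then assigns to each of the remaining $n-4$ elements its own distinct non-match by a case split on which arc the element lies in and whether it is above or below the relevant threshold. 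To complete your argument you would need to supply an assignment of exactly this kind, verified case by case for every possible location of the intermediate values.
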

\begin{proof}
Let $C =(1,c_1, \ldots, c_{n-1}) \in \mathcal{C}_n$  be an $n$-cycle 
where $n\ge4$.

For part (1), suppose that $\langle 1,n\rangle \notin\mathcal{NM}_\mu(C)$ and ${NM}_\mu(C)\neq 0$. Then, there are no integers that are cyclically in between $1$ and $n$ in 
$C$ so that $c_1 =n$. 
Since ${NM}_\mu(C)\neq0$, $C\ne(1,n,n-1,\dots,3,2)$. Thus, $C$ must 
be of the form 
$$C=(1,n,n-1,\dots,n-a,b,\underbrace{\dots}_A b+1 \underbrace{\dots}_B)$$
where $b+1 < n-a$. That is, the sequence $c_1>c_2 > \cdots > c_{a+1}$ 
consists of the decreasing interval from $n$ to $n-a$ and then $b\leq n-a-2$. 
It follows that the pairs $\langle 1,b+1\rangle ,\langle b,n\rangle ,\dots,\langle
b,n-a\rangle $ are all 
\emph{non-$\mu$-matches} in $C$ which accounts for 
$a+2$ non-$\mu$-matches in $C$. Let $A$ be the set of integers cyclically between $b$ and $b+1$ in $C$ and
let $B$ be the set of integers cyclically between $b+1$ and $1$ in $C$. 
Note that 
\begin{enumerate}
\item
if $x\in A$ and $x<b$, then $\langle x,n\rangle \in\mathcal{NM}_\mu(C)$, 
\item
if $x\in A$ and $x>b$, then $\langle 1,x\rangle \in\mathcal{NM}_\mu(C)$,
\item
if $x\in B$ and $x<b$, then $\langle x,b+1\rangle \in\mathcal{NM}_\mu(C)$, and 
\item
if $x\in B$ and $x>b$, then $\langle 1,x\rangle \in\mathcal{NM}_\mu(C)$.
\end{enumerate}
It follows that each element in $A$ and $B$ is part of  at least 
one non-$\mu$-match in $C$ so that we know that 
$NM_\mu(C) \geq a+2 + |A| +|B|$. Thus, since $|A|+|B|=n-a-4$,  we have
$NM_\mu(C) \geq n-2$. 
\\ 

For part (2), suppose that $x\ne 1$, $\langle x,y\rangle \in\mathcal{NM}_\mu(C)$, and 
 $\langle x-1,y\rangle \notin\mathcal{NM}_\mu(C)$.
Then $x$ cannot be cyclically between $x-1$ and $y$ in $C$. 
Also, there exists an integer $z$ with
$x<z<y$ such that
$z$ is cyclically between $x$ and $y$ in $C$, but $z$ is not cyclically between $x-1$ and $y$ in $C$. It follows that $C$ is of the form 
$$C=(\underbrace{1,\dots }_{A_3},x, \underbrace{\dots, z, \dots}_{A_1}, x-1, \underbrace{\dots}_{A_2}, y, \underbrace{\dots}_{A_3})$$
where $A_1$ is the set of integers cyclically between $x$ and $x-1$ in $C$ 
that are not equal to $z$, 
$A_2$ is 
the set of integers cyclically between $x-1$ and $y$ in $C$, and 
$A_3$ is the set of integers between $y$ and $x$ in $C$. Note that 
$\langle x,y\rangle $ and $\langle x-1,z\rangle $ are in $\mathcal{NM}_\mu(C)$. Moreover,  
\begin{enumerate}
\item
if $a\in A_1$ and $a<x-1$, then $\langle a,y\rangle \in\mathcal{NM}_\mu(C)$,
\item
if $a\in A_1$ and $a>x$, then $\langle x-1,a\rangle \in\mathcal{NM}_\mu(C)$,
\item
if $a\in A_2$ and $a<x-1$, then $\langle a,z\rangle \in\mathcal{NM}_\mu(C)$,
\item there are no $a\in A_2$ with $x-1<a<y$ 
since $\langle x-1,y\rangle \notin\mathcal{NM}_\mu(C)$, 
\item
if $a\in A_2$ and $a>y$, then $\langle 1,a\rangle \in\mathcal{NM}_\mu(C)$, 
\item
if $a\in A_3$ and $a<z$, then $\langle a,y\rangle \in\mathcal{NM}_\mu(C)$, and 
\item
if $a\in A_3$ and $a>z$, then $\langle x,a\rangle \in\mathcal{NM}_\mu(C)$.
\end{enumerate}
Therefore, any integer $a\in A_1\cup A_2\cup A_3$ is part of a distinct 
non-$\mu$-match in $C$. Since \\
$|A_1\cup A_2\cup A_3|=n-4$, it 
follows that $NM_\mu(C) \geq 2 + n-4 = n-2$.

For part (3), suppose that $y\ne n$, $\langle x,y\rangle \in\mathcal{NM}_\mu(C)$, 
and $\langle x,y+1\rangle \notin\mathcal{NM}_\mu(C)$.
Then $y$ cannot be cyclically between $x$ and $y+1$ in $C$. Also, there exists an integer $z$ with 
$x<z<y$ such that
$z$ is cyclically between $x$ and $y$ in $C$, but $z$ is not cyclically between $x$ and $y+1$ in $C$. Thus, $C$ must be of the form 
$$C=(\underbrace{1,\dots }_{B_3},x, \underbrace{\dots}_{B_1}, y+1, \underbrace{\dots, z, \dots}_{B_2}, y, \underbrace{\dots}_{B_3})$$
where $B_1$ is the set of integers cyclically between $x$ and $y+1$ in $C$, 
$B_2$ is the set of integers cyclically between $y+1$ and $y$ in $C$ that 
are not equal to $z$, and 
$B_3$ is the set of integers cyclically between $y$ and $x$ in $C$. Note that 
$\langle x,y\rangle $ and $\langle z,y+1\rangle$ are in $\mathcal{NM}_\mu(C)$.  Moreover, 
\begin{enumerate}
\item
if $b\in B_1$ and $b<x$, then $\langle b,y\rangle \in\mathcal{NM}_\mu(C)$, 
\item there is no $b\in B_1$ with $x<b<y+1$ 
since $\langle x,y+1\rangle \notin\mathcal{NM}_\mu(C)$, 
\item
if $b\in B_1$ and $b>y+1$, then $\langle z,b\rangle \in\mathcal{NM}_\mu(C)$,
\item
if $b\in B_2$ and $b<y$,  then $\langle b,y+1\rangle \in\mathcal{NM}_\mu(C)$,
\item
if $b\in B_2$ and $b>y+1$, then $\langle x,b\rangle \in\mathcal{NM}_\mu(C)$,
\item
if $b\in B_3$ and $b<z$, then $\langle b,y\rangle \in\mathcal{NM}_\mu(C)$, and 
\item
if $b\in B_3$ and $b>z$ then $\langle x,b\rangle \in\mathcal{NM}_\mu(C)$.
\end{enumerate}
Therefore, any integer $b\in B_1\cup B_2\cup B_3$ is part of a distinct 
non-$\mu$-match in $C$. Since 
$|B_1\cup B_2\cup B_3|=n-4$, it follows that 
$NM_\mu(C)\geq 2+ n-4 =n-2$.
\end{proof}

\begin{corollary}\label{nmplot}
If $NM_\mu(C)< n-2$, then $NMplot(C)$ is a Ferrers diagram.
\end{corollary}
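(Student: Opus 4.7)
The plan is to observe that Corollary \ref{nmplot} is essentially the contrapositive of Lemma \ref{lem:n-2}, packaged set-theoretically. Recall the three criteria listed just before the lemma that characterize when a set $S$ of pairs produces a Ferrers diagram $plot_n(S)$: (i) if $S \neq \emptyset$ then $\langle 1, n\rangle \in S$; (ii) if $x \neq 1$ and $\langle x, y\rangle \in S$ then $\langle x-1, y\rangle \in S$; (iii) if $y \neq n$ and $\langle x, y\rangle \in S$ then $\langle x, y+1\rangle \in S$. Parts (1), (2), and (3) of Lemma \ref{lem:n-2} assert exactly that the failure of each of these criteria, when applied to $S = \mathcal{NM}_\mu(C)$, forces $NM_\mu(C) \geq n-2$.

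The deduction I would write is therefore the following. Assume $NM_\mu(C) < n-2$. If $\mathcal{NM}_\mu(C) = \emptyset$, then $NMplot(C)$ is the empty Ferrers diagram and we are done. Otherwise, the contrapositive of part (1) of the lemma gives $\langle 1, n\rangle \in \mathcal{NM}_\mu(C)$, so (i) holds. The contrapositive of part (2) gives (ii): if $\langle x, y\rangle \in \mathcal{NM}_\mu(C)$ with $x \neq 1$, then $\langle x-1, y\rangle \in \mathcal{NM}_\mu(C)$. Likewise, the contrapositive of part (3) yields (iii). Since $\mathcal{NM}_\mu(C)$ satisfies all three characterizing properties, $NMplot(C)$ is a Ferrers diagram.

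No further obstacle arises at the level of the corollary; the case analysis is entirely absorbed into the three parts of Lemma \ref{lem:n-2}, and the corollary amounts to matching each part with the corresponding Ferrers-diagram axiom.
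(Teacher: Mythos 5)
Your proposal is correct and matches the paper's own proof: both deduce the three Ferrers-diagram properties of $\mathcal{NM}_\mu(C)$ by taking the contrapositive of each of the three parts of Lemma \ref{lem:n-2}, handling the empty case separately. No differences worth noting.
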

\begin{proof}
This corollary follows directly from Lemma \ref{lem:n-2}. That is, 
suppose that $C \in \mathcal{C}_n$ and $NM_\mu(C) < n-2$. First, 
if $NM_\mu(C) =0$, then $C = (1,n,n-1, \ldots, 2)$ in which 
case $FD_n(C)$ has no shaded squares which correspond to the empty partition. 
Thus, assume that $1 \leq NM_\mu(C) < n-2$. Then, it 
follows from part (1) of Lemma \ref{lem:n-2} that  
$\langle 1,n\rangle \in\mathcal{NM}_\mu(C)$. Next it follows from part (2) of Lemma \ref{lem:n-2} that if $x\ne 1$ and $\langle x,y\rangle \in\mathcal{NM}_\mu(C)$, then 
$\langle x-1,y\rangle \in\mathcal{NM}_\mu(C)$. Finally, it follows from part (3) 
of  Lemma \ref{lem:n-2} that if  $y\ne n$ and $\langle x,y\rangle \in\mathcal{NM}_\mu(C)$, 
then $\langle x,y+1\rangle \in\mathcal{NM}_\mu(C)$. Hence, the shaded 
cells $NMplot(C)$ must be a 
Ferrers diagram of a partition $\lambda$ of $NM_\mu(C)$. 
\end{proof}

If $n \geq 3$, we let 
 $T_n$ be the plot of the Ferrers diagram in the $n \times n$ 
grid corresponding the to partition $\lambda = (n-2,n-3, \ldots, 1)$. Thus, for 
example, $T_7$ is pictured in Figure \ref{fig:T7}. One can see that the sets that $T_n$ have the property that
$|\{\lambda: \lambda \text{ is a partition and }FD_n(\lambda)\subseteq T_n\}|=C_n$ where $C_n$ is the $n$th Catalan number since the lower boundaries 
of the plots of $FD(\lambda)$ for such $\lambda$ 
correspond to Dyck paths.
 \\

\begin{figure}[htbp]
  \begin{center}
    \includegraphics[width=0.25\textwidth]{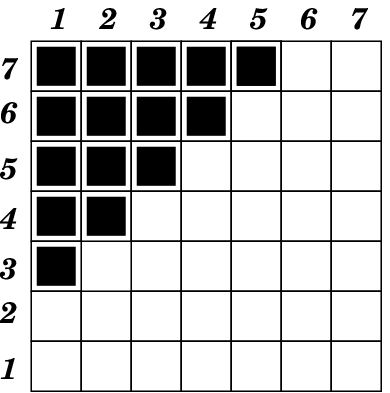}
    \caption{ The Ferrers diagram $T_7$ on the $7 \times 7$ grid.}
    \label{fig:T7}
  \end{center}
\end{figure}

Our next theorem will show that for any $n \geq 3$ and any 
partition  $\lambda\subseteq T_n$, we can construct an 
$n$-cycle $C \in \mathcal{C}_n$ such that 
$NMplot(C) = FD_n(\lambda)$. 

\begin{theorem}\label{thm:const} Suppose that $n \geq 3$ and 
$\lambda$ is a partition such that $\lambda \subseteq T_n$. Then, 
there is an $n$-cycle $C \in \mathcal{C}_n$ such that 
$NMplot(C) = FD_n(\lambda)$.
\end{theorem}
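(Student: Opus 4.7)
The plan is induction on $n$, constructing $C$ by inserting $n$ into a cycle realizing a smaller partition at a carefully chosen position. Given $\lambda \subseteq T_n$, I would set $r = \lambda_1$ and $\lambda' = (\lambda_2, \lambda_3, \dots)$. Since $\lambda$ is a partition and $\lambda \subseteq T_n$, one checks that $\lambda' \subseteq T_{n-1}$ and that $\lambda'_1 \leq r$. By the inductive hypothesis (with the trivial small cases $n\le 3$ handled by inspection) there is a unique $C' \in \mathcal{C}_{n-1}$ with $NMplot(C') = FD_{n-1}(\lambda')$, and my proposal is to form $C \in \mathcal{C}_n$ by inserting $n$ into $C'$ immediately after the element $r + 1$.

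To verify that $NMplot(C) = FD_n(\lambda)$, I would first note that the non-$\mu$-matches of $C$ among pairs $\langle i, j \rangle$ with $j \leq n - 1$ coincide with those of $C'$: inserting $n$ preserves the cyclic order of $\{1, \dots, n-1\}$, and $n$ cannot serve as a witness $x$ in an open interval $(i, j)$ with $j \leq n - 1$. For a pair $\langle k, n \rangle$ the arc traversed clockwise from $k$ to $n$ in $C$ consists of the elements of $C'$ cyclically strictly between $k$ and $r+1$ together with $r+1$ itself; therefore $\langle k, n \rangle$ is a non-match of $C$ iff this arc contains an element exceeding $k$, equivalently iff $k$ is not a right-to-left maximum of the linear sequence $L$ obtained by cutting $C'$ immediately after $r+1$ (so that $r+1$ occupies the last position of $L$).

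The main obstacle is to show that the right-to-left maxima of $L$ are exactly the set $\{r+1, r+2, \dots, n-1\}$. The plan here is to exploit the Ferrers hypothesis: because $\lambda'_1 \leq r$, no cell of $FD_{n-1}(\lambda')$ lies in column $r+1$ of any row $s' \geq r+2$, so $\langle r+1, s' \rangle$ is a $\mu$-match in $C'$ for every $s' \in \{r+2, \dots, n-1\}$. Applying this match condition successively for $s' = n-1,\ n-2,\ \dots,\ r+2$ forces the elements of $\{r+2, \dots, n-1\}$ to appear in $L$ in the strictly decreasing order $n-1,\ n-2,\ \dots,\ r+2$, since the match condition for $s'$ prevents any smaller element of $\{r+2,\dots,s'-1\}$ from preceding $s'$ in $L$. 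The remaining entries of $L$ then lie in $\{1, \dots, r\}$ and are each dominated by the next element of $\{r+1, \dots, n-1\}$ to their right, so they contribute no additional right-to-left maxima.

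Once this identification of right-to-left maxima is in hand, the non-matches $\langle k, n \rangle$ of $C$ occur precisely for $k \in \{1, \dots, r\}$, which together with the inherited non-matches from $C'$ yields $NMplot(C) = FD_n(\lambda)$ and closes the induction.
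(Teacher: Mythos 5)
Your proposal is correct and takes essentially the same route as the paper: induction on $n$, peeling off $\lambda_1$ to get $\lambda^-=(\lambda_2,\ldots,\lambda_k)\subseteq T_{n-1}$, and inserting $n$ immediately after $\lambda_1+1$ into the $(n-1)$-cycle realizing $\lambda^-$. The only organizational difference is that the paper carries the extra clause that $n-1$ immediately follows $\lambda_2+1$ through the induction and reads the decreasing arrangement of $n-2,n-3,\ldots,\lambda_2+1$ off the $\mu$-matches $\langle j,n-1\rangle$ in the top row of $FD_{n-1}(\lambda^-)$, whereas you invoke the uniqueness of $C'$ and extract the same structural information from the matches $\langle \lambda_1+1,s'\rangle$ coming from the empty column $\lambda_1+1$; both verifications reduce to the same identification of which $\langle k,n\rangle$ are non-matches.
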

\begin{proof}
We proceed by induction on $n$. For $n=3$, it is 
easy to see that if $C^{(1)}=(1,3,2)$, the $NMplot(C^{(1)})$ is empty and 
if $C^{(2)} =(1,2,3)$, then $NMPlot(C^{(2)}) = T_3$. Note that 
$C^{(1)}$ and $C^{(2)}$ have the property that if the largest part 
of the corresponding partition is of size $i$, then 
$3$ immediately follows $i+1$ in the cycle. 
Thus, our theorem 
holds for $n=3$. 

Now, suppose that $n > 3$ and 
$\lambda = (\lambda_1, \ldots, \lambda_k)$ is 
a partition which is contained in $T_n$. It follows that 
$\lambda^- = (\lambda_2, \ldots, \lambda_k)$ is contained 
in $T_{n-1}$. Assume by induction that there is an 
 $(n-1)$-cycle $C'$ such that $NMplot(C') =FD_{n-1}(\lambda^-)$ 
and $n-1$ immediately follows $\lambda_2+1$ in $C'$.
Thus, in $C'$, the pairs $\langle \lambda_2+1,n-1\rangle , 
\langle \lambda_2+2,n-1\rangle , \ldots, 
\langle n-3,n-1\rangle$ must match $\mu$ in $C'$. This means that 
if  
$\lambda_2 +1 \leq n-3$, then $n-3$ must lie between $n-2$ and $n-1$ in 
$C'$. Next  if  $\lambda_2 +1 \leq n-4$, then $n-4$ must lie 
between $n-3$ and $n-1$ in $C'$. In general, if 
$\lambda_2 +1 \leq n-k$, then $n-k$ must lie 
between $n-k+1$ and $n-1$ in $C'$. Thus, $C'$ must be of the following 
form  
$$C'=(\underbrace{\dots}_{A_{n-1}} n-2 \underbrace{\dots}_{A_{n-2}} n-3  
\underbrace{\dots}_{A_{n-3}} n-4 \underbrace{\dots}_{A_{n-4}} 
\cdots  (\lambda_2+2)\underbrace{\dots}_{A_{\lambda_2  +2}} (\lambda_2+1) ,
(n-1)  
\underbrace{\dots}_{A_{n-1}}).$$ 
where $A_i$ is the set of elements cyclically between $n-i$ and $n-i-1$ in 
$C'$ for 
$i =1, \ldots, n-\lambda_2-2$. 
Now let $C$ be the cycle that results from $C'$ by inserting 
$n$ immediately after $\lambda_1+1$ in $C'$. Inserting 
$n$ into $C'$ does not effect on whether pairs 
$\langle i,j\rangle $ with $1\leq i < j \leq n-1$ are 
$\mu$-matches in $C$. That 
is, for such pairs $\langle i,j\rangle  \in \mathcal{NM}_\mu(C)$ 
if and only if 
$\langle i,j\rangle  \in \mathcal{NM}_\mu(C')$. Thus, the diagram of 
$NMplot(C')$ and $NMplot(C)$ are the same up to row $n-1$. Now, in 
row $n$, we know that the cells $(n,1), \ldots, (n,\lambda_1)$ are 
shaded since the fact that $n$ immediately follows $\lambda_1+1$ 
in $C$ means that $\langle i,n\rangle  \in \mathcal{NM}_\mu(C)$ for 
$i=1, \ldots, \lambda_1$. However, it is easy to see 
from the form of $C'$ above that $\langle n-2,n\rangle , \langle n-3,n\rangle , \ldots, 
\langle\lambda_1+1,n\rangle $ are $\mu$-matches in $C$ since 
$n-2, n-3, \ldots, \lambda_1+1$ appear in decreasing order as 
we traverse clockwise around the cycle $C$.  Thus, 
$NMplot(C) = FD_n(\lambda)$. 
\end{proof}

Note the proof of Theorem \ref{thm:const} gives a simple 
algorithm to construct an $n$-cycle 
$C_\lambda$ such that $NMplot(C_\lambda) =FD_n(\lambda)$ for 
any $\lambda \subseteq T_n$. For example, 
suppose that $n =12$ and $\lambda =(3,3,2,1)$ so that the Ferrers 
diagram in the $12 \times 12$ grid is pictured in Figure \ref{fig:ex}.

\begin{figure}[htbp]
  \begin{center}
    \includegraphics[width=0.3\textwidth]{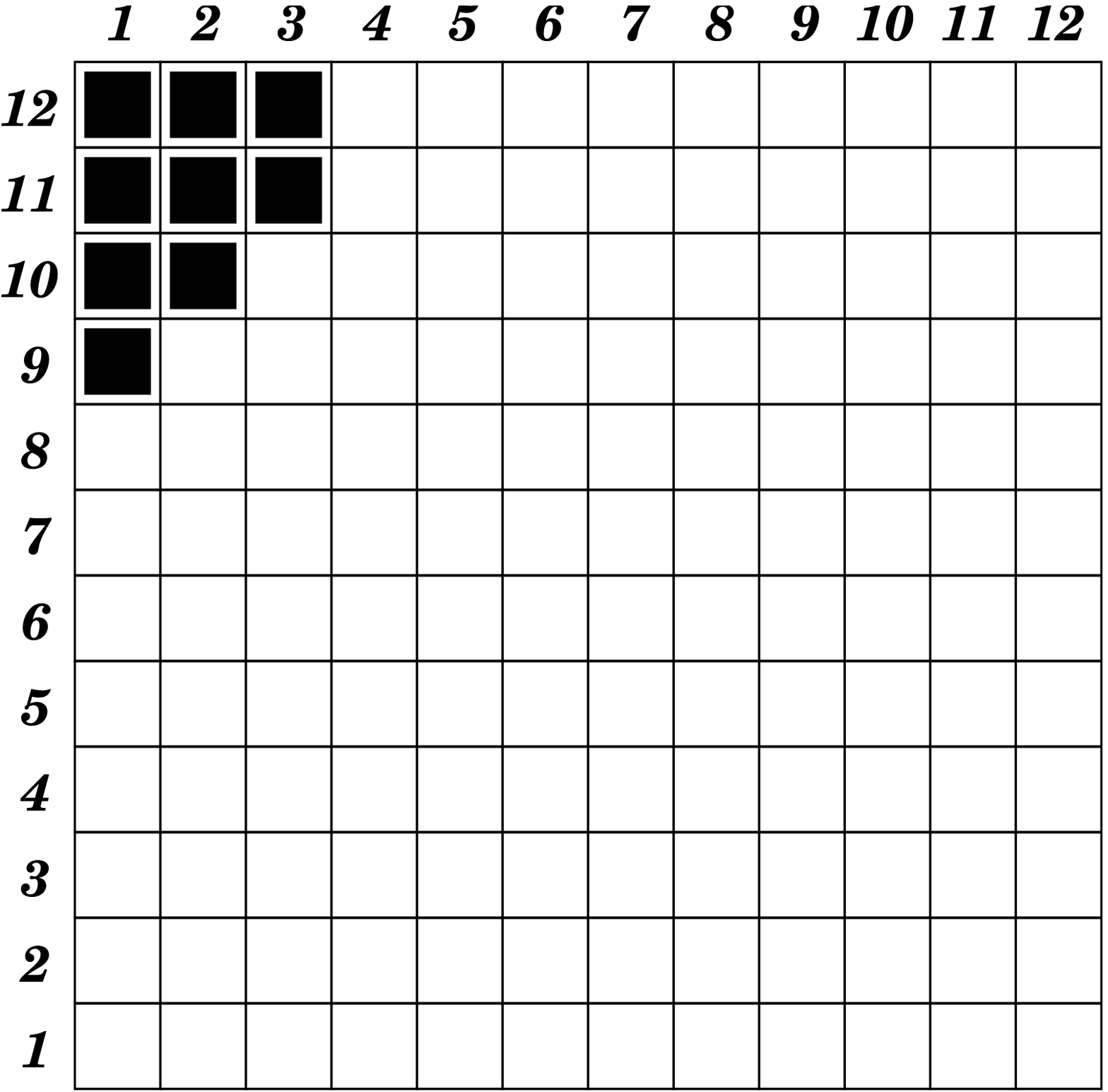}
    \caption{ $sh_{12}(3,3,2,1)$.}
    \label{fig:ex}
  \end{center}
\end{figure}

Since there are no non-$\mu$-matches in the first eight rows, we 
must start with the cycle $C_8=(1,8,7,6,5,4,3,2)$. Then, our 
proof of Theorem \ref{thm:const} tells us that we 
should build up the cycle structure by first 
creating a cycle $C_9 \in \mathcal{C}_9$ by inserting 
$9$ immediately after 2 in $C_8$, since the number of 
non-$\mu$-matches in row 9 is 1.  Then we 
create a cycle $C_{10} \in \mathcal{C}_{10}$ by inserting 
$10$ immediately after 3 in $C_9$ since the number of 
non-$\mu$-matches in row 10 is 2. Then, we 
create a cycle $C_{11} \in \mathcal{C}_{11}$ by inserting 
$11$ immediately after 4 in $C_{10}$ since the number of 
non-$\mu$-matches in row 11 is 3. Finally, we 
create a cycle $C_{12} = C_\lambda \in \mathcal{C}_{12}$ by inserting 
$12$ immediately after 4 in $C_{11}$ since the number of 
non-$\mu$-matches in row 12 is 3. Thus, 

\begin{eqnarray*}
C_9 &=& (1,8,7,6,5,4,3,2,9), \\
C_{10} &=& (1,8,7,6,5,4,3,10,2,9),\\
C_{11} &=& (1,8,7,6,5,4,11,3,10,2,9),\ \mbox{and} \\
C_{12} &=& (1,8,7,6,5,4,12,11,3,10,2,9).
\end{eqnarray*}
Then we have that  
\begin{align*}
\mathcal{NM}_\mu((1,8,7,6,5,4,12,11,3,10,2,9))=&\langle1,12\rangle,\langle2,12\rangle,\langle3,12\rangle,\\
&\langle1,11\rangle,\langle2,11\rangle,\langle3,11\rangle,\\
&\langle1,10\rangle,\langle2,10\rangle,\\
&\langle1,9\rangle.
\end{align*}

Now we can prove Theorem \ref{nm}.
\begin{proof} Suppose that $k \leq n-2$. Let
\begin{eqnarray*}  
FD_n(k)&=& \{C \in \mathcal{C}_n: NMplot(C) = FD_n(\lambda) \ \mbox{for some} \  \lambda\vdash k\} \ \mbox{and} \\
NMp_n(k)&=&\{C \in \mathcal{C}_n: NM_\mu(C) =k\}.
\end{eqnarray*}
 Theorem \ref{nmplot} shows that $NMp_n(k) \subseteq FD_n(k)$ 
and Theorem \ref{thm:const} shows that 
$FD_n(k) \subseteq NMp_n(k)$. Thus, 
$NM_{n,\mu}(q)|_{q^k} =  |NMp_n(k)| = |FD_n(k)|$. Hence, 
$NM_{n,\mu}(q)|_{q^k}$ equals the number of partitions of $k$.
\end{proof}
 Now, we will show that for $k<n-2$, $NTI_{n,\mu}(q)|_{q^{\binom{n-1}{2} -k}}=NT_{n,\mu}(q)|_{q^{\binom{n-1}{2} -k}}=a(k)$, where
$a(k)$ is the number of 
partitions of $k$. First we shall show that  if $C \in \mathcal{C}_n$ 
has fewer than $n-2$ non-matches, then $C$ must be incontractible. 

\begin{lemma}\label{n-2in}
If $C \in \mathcal{C}_n$ 
has fewer than $n-2$ non-matches, then $C$ must be incontractible. 
\end{lemma}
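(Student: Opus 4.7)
The plan is to prove the contrapositive: assuming $C$ is contractible, I will show $NM_\mu(C) \geq n-2$. The key idea is to leverage Theorem \ref{ntmu}, which tells us that contraction preserves $NT_\mu$, together with the complementary relation between non-matches and nontrivial matches.

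First I would record the identity $NM_\mu(X) = \binom{|X|-1}{2} - NT_\mu(X)$ for every cycle $X$, which follows because the total number of pairs $\langle i,j\rangle$ with $i<j$ in an $m$-cycle is $\binom{m}{2}$, of which exactly $m-1$ are trivial $\mu$-matches and $NT_\mu(X)$ are nontrivial ones. Now suppose $C\in \mathcal{C}_n$ is contractible and let $A = \cont(C)\in \mathcal{IC}_\ell$. Then $\ell \leq n-1$, and by Theorem~\ref{ntmu} we have $NT_\mu(C) = NT_\mu(A)$.

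Subtracting the two instances of the identity above gives
\begin{equation*}
NM_\mu(C) - NM_\mu(A) \;=\; \binom{n-1}{2} - \binom{\ell-1}{2}.
\end{equation*}
Since $NM_\mu(A) \geq 0$, we conclude
\begin{equation*}
NM_\mu(C) \;\geq\; \binom{n-1}{2} - \binom{\ell-1}{2}.
\end{equation*}
The right-hand side is a decreasing function of $\ell$, so on the range $\ell \leq n-1$ it is minimized at $\ell = n-1$, where it equals $\binom{n-1}{2} - \binom{n-2}{2} = n-2$. Hence $NM_\mu(C) \geq n-2$, contradicting the assumption $NM_\mu(C) < n-2$ and proving $C$ is incontractible.

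There is no real obstacle here: the statement is an immediate consequence of Theorem~\ref{ntmu} once one converts the non-match count into the nontrivial match count. The only small point worth double-checking is that contraction strictly decreases the length of the cycle precisely when $C$ is contractible, so that $\ell \leq n-1$ is forced; this is built into the definition of contraction via consecutive runs.
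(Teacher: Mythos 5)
Your proof is correct, but it takes a genuinely different route from the paper's. The paper argues directly and locally: a single bond $(i,i+1)$ in $C$ immediately forces the $i-1$ non-matches $\langle j,i+1\rangle$ for $j<i$ and the $n-i-1$ non-matches $\langle i,k\rangle$ for $k\geq i+2$, which already total $n-2$; no other machinery is needed. You instead route the argument through Theorem \ref{ntmu} (invariance of $NT_\mu$ under contraction) combined with the complementary identity $NM_\mu(X)+NT_\mu(X)=\binom{|X|-1}{2}$, obtaining the exact relation
\begin{equation*}
NM_\mu(C)=\binom{n-1}{2}-\binom{\ell-1}{2}+NM_\mu(\cont(C)),
\end{equation*}
and then minimizing over $\ell\leq n-1$. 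All the steps check out: the identity is the one the paper records just before Theorem \ref{nm}, the inequality $\ell\leq n-1$ for contractible $C$ is forced by the definition of consecutive runs, and $\binom{n-1}{2}-\binom{n-2}{2}=n-2$. What the paper's proof buys is self-containedness and brevity (it needs nothing beyond the definition of a non-match); what yours buys is a sharper quantitative statement, namely that each unit decrease in length under contraction costs at least $n-2$ further non-matches (in fact your displayed identity gives the exact count of non-matches of $C$ in terms of those of its contraction), which makes transparent why cycles with few non-matches must be incontractible.
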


\begin{proof}
If $i+1$
immediately follows $i$ in an $n$-cycle $C \in \mathcal{C}_n$, then,
clearly, $\langle j,i+1 \rangle$ are non-$\mu$-matches for $1 \leq j < i$
and $\langle i, k \rangle$ is a non-$\mu$-match for $i+2 \leq k \leq n$.
This gives $n-2$ non-$\mu$-matches.
\end{proof}

\begin{corollary}
For $k<n-2$, $NTI_{n,\mu}(q)|_{q^{\binom{n-1}{2} -k}}=NT_{n,\mu}(q)|_{q^{\binom{n-1}{2} -k}}=a(k)$  where
$a(k)$ is the number of 
partitions of $k$.
\end{corollary}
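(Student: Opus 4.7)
The plan is to combine the three results already in hand: Theorem \ref{nm}, the complementarity $NM_\mu(C)+NT_\mu(C)=\binom{n-1}{2}$, and Lemma \ref{n-2in}.

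First I would translate Theorem \ref{nm} into a statement about $NT_{n,\mu}(q)$. For any $C\in\mathcal{C}_n$, every pair $\langle i,j\rangle$ with $i<j$ is either an occurrence of $\mu$ or a non-$\mu$-match, and there are $\binom{n-1}{2}$ such pairs counted by $NT_\mu$ plus $NM_\mu$ (after subtracting the $n-1$ trivial matches of the form $\langle i,i+1\rangle$). This gives $NT_\mu(C)+NM_\mu(C)=\binom{n-1}{2}$, hence the coefficient equality
\begin{equation*}
NT_{n,\mu}(q)\big|_{q^{\binom{n-1}{2}-k}}=NM_{n,\mu}(q)\big|_{q^k},
\end{equation*}
and Theorem \ref{nm} immediately yields $NT_{n,\mu}(q)|_{q^{\binom{n-1}{2}-k}}=a(k)$ for $k<n-2$.

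Next I would handle the incontractible version. By Lemma \ref{n-2in}, every $C\in\mathcal{C}_n$ with $NM_\mu(C)=k<n-2$ lies in $\mathcal{IC}_n$. Equivalently, every $C\in\mathcal{C}_n$ with $NT_\mu(C)=\binom{n-1}{2}-k$ is incontractible, so the $n$-cycles contributing to the coefficient of $q^{\binom{n-1}{2}-k}$ in $NT_{n,\mu}(q)$ are exactly those contributing to the coefficient of $q^{\binom{n-1}{2}-k}$ in $NTI_{n,\mu}(q)$. Therefore
\begin{equation*}
NTI_{n,\mu}(q)\big|_{q^{\binom{n-1}{2}-k}}=NT_{n,\mu}(q)\big|_{q^{\binom{n-1}{2}-k}}=a(k),
\end{equation*}
completing the proof.

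There is no genuine obstacle here; the corollary is essentially an assembly of three already-proved facts. The only small thing to check is the arithmetic $NT_\mu(C)+NM_\mu(C)=\binom{n-1}{2}$, which is immediate from the definitions once one recalls that $NT_\mu$ counts only nontrivial $\mu$-matches $\langle i,j\rangle$ with $i+1<j$, and $NM_\mu$ counts pairs $\langle i,j\rangle$ with $i<j$ that are not occurrences of $\mu$ at all (so the $n-1$ trivial pairs $\langle i,i+1\rangle$ are $\mu$-matches and hence contribute to neither statistic, leaving exactly $\binom{n}{2}-(n-1)=\binom{n-1}{2}$ pairs partitioned between the two).
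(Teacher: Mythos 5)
Your proposal is correct and follows essentially the same route as the paper: the coefficient identity $NT_{n,\mu}(q)|_{q^{\binom{n-1}{2}-k}}=NM_{n,\mu}(q)|_{q^k}$ combined with Theorem \ref{nm}, and then Lemma \ref{n-2in} to identify the contributing cycles as incontractible. Your explicit check of the arithmetic $NT_\mu(C)+NM_\mu(C)=\binom{n-1}{2}$ is a welcome bit of extra care, but otherwise the argument is the paper's own.
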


\begin{proof}
By definition, $NT_{n,\mu}(q)|_{q^{{n-1\choose2}-k}}=NM_{n,\mu}(q)|_{q^k}$. Thus, by Theorem
\ref{nm}, for $k<n-2$, $NT_{n,\mu}(q)|_{q^{\binom{n-1}{2} -k}}=a(k)$.

By Lemma \ref{n-2in}, we have that if a cycle $C$ has $k$ non-$\mu$-matches 
where $k<n-2$,  then $C$ is 
incontractible. It follows that if a cycle has ${\binom{n-1}{2} -k}$ non-trivial $\mu$-matches, then it is 
incontractible. Thus, for $k<n-2$, $NTI_{n,\mu}(q)|_{q^{\binom{n-1}{2} -k}}=NT_{n,\mu}(q)|_{q^{\binom{n-1}{2} -k}}$. And so $NTI_{n,\mu}(q)|_{q^{\binom{n-1}{2} -k}}=a(k)$ for $k<n-2$.
\end{proof}

 \section{Conclusions and direction for further research}\label{concl}
 
 In this paper, we studied the polynomials 
$NT_{n,\mu}(q) = \sum_{C \in \mathcal{C}_n} q^{NT_\mu(C)}$ and 
$NTI_{n,\mu}(q) = \sum_{C \in \mathcal{IC}_n} q^{NT_\mu(C)}$. 
We showed that $NTI_{n,\mu}(1)$ is the number of derangements of  
$S_{n-1}$. Thus, the polynomial $NTI_{n,\mu}(q)$ is a $q$-analogue 
of the derangement number $D_{n-1}$.  There are several 
$q$-analogues of the derangement numbers that have been studied 
in the literature, see the papers  by Garsia 
and Remmel \cite{GarRem} and Wachs\cite{Wachs}.  Our 
$q$-analogue of $D_{n-1}$ is different from either 
the Garsia-Remmel $q$-analogue or the Wachs $q$-analogue of 
the derangement numbers. Moreover, we proved that 
$$NT_{n,\mu}(q)|_{q^k}=\sum_{s=1}^{2k+1} \sum_{\substack{A \in \mathcal{IC}_s,\\NT_\mu(A)=k}}
{n-1\choose s-1}$$
so that the coefficients of the polynomial $NT_{n,\mu}(q)$ can 
be expressed in terms of the coefficients of the polynomials 
$NTI_{j,\mu}(q)$.  We also showed that $NTI_{n,\mu}(q)|_{q^{\binom{n-1}{2}-k}}$
equals the number of partition of $k$ for $k < n-2$.

The main open question is to find some sort of recursion 
or generating function that would allow us to compute $NTI_{n,\mu}(q)$. 
Note that several of the sequences 
$(NT_{n,\mu}(q)|_{q^k})_{n \geq 2}$ appear in the OEIS \cite{oeis}. 
For example, we showed that $ NT_{n,\mu}(q)|_{q^2} = \binom{n-1}{3} + 
2\binom{n-1}{4}$ so that the sequence $(NT_{n,\mu}(q)|_{q^2})_{n \geq 4}$ 
starts out $1,6,20,50,105,196,336,540, 825, \ldots$.  The 
$n$th term of this sequence has several combinatorial interpretations 
including being the number of $\sg \in S_n$ which are $132$-avoiding and 
have exactly two descents, the number of Dyck paths on length $2n+2$ with 
$n-1$ peaks,  and the number of squares with corners on the 
$n \times n$ grid.  Thus, it would be interesting to find 
bijections from these objects to our $(n+4)$-cycles $C \in \mathcal{C}_n$ 
such that 
$NT_\mu(C) =2$. We also  showed that $ NT_{n,\mu}(q)|_{q^3} = \binom{n-1}{3} + 
3\binom{n-1}{4}+6\binom{n-1}{5}+5\binom{n-1}{6}$ so that the sequence $(NT_{n,\mu}(q)|_{q^3})_{n \geq 4}$ 
starts out $$1,7,31,102,273,630,1302,2472, 4389 \ldots.$$ This sequences 
does not appear in the OEIS. Similarly, the sequence $(NT_{n,\mu}(q)|_{q^4})_{n \geq 5}$ starts out $2,23,135,561,1870,5328,13476, \ldots $ and it does not 
appear in the OEIS.

Finally, it would be interesting to characterize 
the charge graphs of those cycles $C \in \mathcal{IC}_{2n}$ for which  $NM_\mu(C) =n$.  One can see from our tables that sequence 
$(NTI_{2n,\mu}(q)|_{q^n})_{n \geq 2}$ starts out 
$1,6,29,130, \ldots $. Moreover, we have computed 
the $NTI_{12,\mu}(q)|_{q^6} =562$.  This suggests that 
this sequence is sequence A008549 in the OEIS.  If so, 
this would mean that  
$NTI_{2n,\mu}(q)|_{q^n} =\sum_{i=0}^{n-2} \binom{2n-1}{i}$ for 
$n \geq 2$.


\begin{thebibliography}{10}


\bibitem{AKV} S. Avgustinovich, S. Kitaev and A. Valyuzhenich, Avoidance of boxed mesh patterns on permutations, {\em Discrete Appl. Math.}, {\bf 161} (2013) 43--51.

\bibitem{BrCl} P. Br\"and\'en and A. Claesson, Mesh patterns and the expansion of permutation statistics as sums of permutation patterns, {\em Elect. J. Comb.}, {\bf 18(2)} (2011), \#P5, 14pp. 

\bibitem{GarRem} A.M. Garsia and J.B. Remmel, A combinatorial 
interpretation of $q$-derangement and $q$-Laguerre numbers, 
European J. Comb., {\bf 1} (1980), 47--59. 




\bibitem{JR} M. Jones and J. Remmel, Pattern matching in the cycle structure of permutations, {\em Pure Math. and Appl.}, 
{\bf 22} (2011), 173--208.

\bibitem{kit} S. Kitaev, Patterns in permutations and words, Springer-Verlag, 2011.

\bibitem{kitlie} S. Kitaev and J. Liese, Harmonic numbers, Catalan triangle and mesh patterns, {\em Discrete Math.}, {\bf 313} (2013) 1515--1531.


\bibitem{LS} A. Lascoux and M.P. Sch\"utzenbeger, Sur une conjecture de 
H.0. Foulkes, C.R. Acad. Sci. Paris S\'er I Math., {\bf 288} (1979), 95--98.

\bibitem{oeis} N.~J.~A.~Sloane, The on-line encyclopedia of integer sequences,
published electronically at \phantom{*} {\tt
http://oeis.org}.

\bibitem{Wachs} M. Wachs, On $q$-derangement numbers, {\em Proc. Amer. Math. Soc.}, 
{\bf 106}(1) (1989), 273--278. 

\end{thebibliography}
\end{document}